\begin{document}

\title{Explicit evaluation of some integrals involving polylogarithm functions}
\author{
Rusen Li
\\
\small School of Mathematics\\
\small Shandong University\\
\small Jinan 250100 China\\
\small \texttt{limanjiashe@163.com}
}

\date{
\small 2020 MR Subject Classifications: 33E20, 11B37, 11M06
}

\maketitle

\def\stf#1#2{\left[#1\atop#2\right]}
\def\sts#1#2{\left\{#1\atop#2\right\}}
\def\e{\mathfrak e}
\def\f{\mathfrak f}

\newtheorem{theorem}{Theorem}
\newtheorem{Prop}{Proposition}
\newtheorem{Cor}{Corollary}
\newtheorem{Lem}{Lemma}
\newtheorem{Example}{Example}
\newtheorem{Remark}{Remark}
\newtheorem{Definition}{Definition}
\newtheorem{Conjecture}{Conjecture}
\newtheorem{Problem}{Problem}

\begin{abstract}
In this paper, we give explicit evaluation for some integrals involving polylogarithm functions of types $\int_{0}^{x}t^{m} Li_{p}(t)\mathrm{d}t$ and $\int_{0}^{x}\log^{m}(t) Li_{p}(t)\mathrm{d}t$. Some more integrals involving the logarithm function will also be derived.
\\
{\bf Keywords:} polylogarithm function, recurrence relation, zeta function
\end{abstract}

\section{Introduction and preliminaries}
Let $\mathbb Z$, $\mathbb N$, $\mathbb N_{0}$ and $\mathbb C$ denote the set of integers, positive integers, nonnegative integers and complex numbers, respectively. The well-known polylogarithm function is defined as
$$
Li_{p}(x):=\sum_{n=1}^\infty \frac{x^n}{n^p}\quad (\lvert x \lvert \leq 1,\quad p \in \mathbb N_{0})\,.
$$
Note that when $p=1$, $-Li_{1}(x)$ is the logarithm function $\log(1-x)$. Furthermore, $Li_{n}(1)=\zeta(n)$, where $\zeta(s)$ denotes the Riemann zeta function which is defined as $\zeta(s):=\sum_{n=1}^\infty n^{-s}$. The famous generalized harmonic numbers of order $m$ is defined by the partial sum of the Riemann Zeta function $\zeta(m)$ as:
$$
H_n^{(m)}:=\sum_{j=1}^n \frac{1}{j^m} \quad (n, m \in \mathbb N)\,.
$$

Before going further, we introduce some notations. Let $p \in \mathbb N$ and $m \in \mathbb N_{0}$, define
\begin{align*}
&J_{0}(m,p):=\int_{0}^{1}x^{m} Li_{p}(x)\mathrm{d}x\,.
\end{align*}
Let $p, m \in \mathbb N_{0}$ and $0 \leq x \leq 1$, define
\begin{align*}
&J_{0}(m,p,x):=\int_{0}^{x}t^{m} Li_{p}(t)\mathrm{d}t\,,
\quad J_{1}(m,p,x):=\int_{0}^{x}\log^{m}(t) Li_{p}(t)\mathrm{d}t\,.
\end{align*}
Let $p, q \in \mathbb N$ and $m \in \mathbb Z$ with $m \geq -2$, define
\begin{align*}
&J(m,p,q):=\int_{0}^{1}x^{m} Li_{p}(x)Li_{q}(x)\mathrm{d}x\,.
\end{align*}
Let $p, q \in \mathbb N_{0}$ with $p+q \geq 1$, $r \in \mathbb N$, define
\begin{align*}
&K(r,p,q):=\int_{0}^{1} \frac{\log^{r}(x) Li_{p}(x)Li_{q}(x)}{x}\mathrm{d}x\,.
\end{align*}
Freitas \cite{Freitas} showed that integrals $J_{0}(m,p)$, $J(m,p,q)$ and $K(r,p,q)$ satisfy the following recurrence relations:
\begin{align*}
&J_{0}(m,q)=\frac{\zeta(q)}{m+1}-\frac{1}{m+1}J_{0}(m,q-1)\quad (q \geq 2, m \geq 0)\,,\\
&J(m,p,q)=\frac{\zeta(p)\zeta(q)}{m+1}-\frac{1}{m+1}\bigg(J(m,p-1,q)+J(m,p,q-1)\bigg)\\
&(p, q\geq 2, m \in \mathbb N_{0}\cup\{-2\})\,,\\
&K(r,p,q)=-\frac{1}{r+1}\bigg(K(r+1,p-1,q)+K(r+1,p,q-1)\bigg)\quad (p, q, r \in \mathbb N)\,.
\end{align*}
From this Freitas proved that integrals $K(r,p,q)$ with $p+q+r$ even and $J(m,p,q)$ could be reduced to zeta values. Note that the proof was constructive, Freitas didn't give explicit evaluations for these integrals. On the contrary, Freitas \cite{Freitas} gave explicit evaluations for $J(-1,p,q)$ and $K(r,0,q)$ with $r+q$ even.

An anonymous reviewer told the author that Sofo \cite{Sofo2016,Sofo2018,Sofo2020}and Xu \cite{xu1,xu2,xu3} had made many progresses in the area of integrals involving polylogarithm functions, which the author was initially unaware of. It is known to all that polylogarithmic functions are intrinsically connected with sums of harmonic numbers. For instance, Sofo \cite{Sofo2016} developed closed form representations for infinite series containing generalized harmonic numbers of type
$$
\sum_{n=1}^\infty\frac{(-1)^{n+1}H_n^{(3)}}{n^{p}\binom{n+k}{k}}\quad (p=0, 1)\,.
$$
The author \cite{Lirusensen} gave closed form representations for generalized hyperharmonic number sums with reciprocal binomial coefficients, which greatly extend Sofo's result. Sofo \cite{Sofo2016} also obtained explicit evaluations for some integrals involving polylogarithm functions. Motivated by the work of Freitas \cite{Freitas}, Sofo \cite{Sofo2018} investigated the representations of integrals of polylogarithms with negative argument of the type
$$
\int_{0}^{1}x^{m} Li_{p}(-x)Li_{q}(-x)\mathrm{d}x\,
$$
for $m \ge -2$, and for integers $p$ and $q$. For $m=-2, -1, 0$, Sofo also gave explicit
representations of the integral in terms of Euler sums and for $m \ge 0$, Sofo obtained a
recurrence relation for the integral. As a more general consideration, Sofo \cite{Sofo2020} considered integrals of polylogarithms with alternating argument of the type
$$
\int_{0}^{1}x^{m} Li_{p}(x)Li_{q}(-x)\mathrm{d}x\,
$$
for integers $p$ and $q$. Similarly, for $m=-2, -1, 0$, Sofo gave explicit representations of the integral in terms of Euler sums. Some more integrals involving polylogarithms were obtained.
Xu \cite{xu1} showed that quadratic Euler sums of the form
\begin{align*}
\sum_{n=1}^\infty \frac{H_n H_n^{(m)}}{n^{p}}\quad(m+p\leq 8)\,,
\end{align*}
and some integrals of polylogarithm functions of the form
\begin{align*}
\int_{0}^{1} \frac{Li_{r}(x) Li_{p}(x) Li_{q}(x)}{x}\mathrm{d}x\quad(r+p+q\leq 8)\,
\end{align*}
can be written in terms of Riemann zeta values. It is interesting that integrals of polylogarithm functions can be related to multiple zeta (star) values. By using integrals of polylogarithm functions, Xu \cite{xu2} gave explicit expressions for some restricted multiple zeta (star) values. Some of lemmas used by Xu \cite{xu2} were also re-discovered by the author in different forms. Furthermore, by using the iterated integral representation of multiple polylogarithm functions, Xu \cite{xu3} proved some conjectures proposed by J. M. Borwein, D. M. Bradley and D. J. Broadhurst \cite{Borwein}. Xu also obtained numerous formulas for alternating multiple zeta values.

In this paper, we mainly give explicit expressions for integrals of types $J_{0}(m,p,x)$, $J_{1}(m,p,x)$, $J(m,p,q)$ and $K(r,p,q)$. In addition, some more explicit formulas for integrals involving the logarithm function of types
\begin{align*}
\int_{0}^{x}\frac{\log^{m}(1-t)}{t^{n}}\mathrm{d}t\,,\quad \int_{0}^{x}\frac{\log^{m}(1+t)}{t^{n}}\mathrm{d}t\,,\quad
\int_{0}^{x}\frac{\log^{m}(t)}{(1-t)^{n}}\mathrm{d}t\,\quad (m, n \in \mathbb N, m\geq n)
\end{align*}
will also be derived.

\section{Integrals involving logarithm function}

De Doelder \cite{Doelder} used the integral $\int_{0}^{x}\frac{\log^{2}(1-t)}{t}\mathrm{d}t$ to evaluate infinite series of type $\sum_{n=1}^\infty \frac{H_{n}}{n^{2}}x^{n}$. As a natural consideration, The author \cite{LIRUSEN} gave explicit evaluations for infinite series involving generalized (alternating) harmonic numbers of types
$\sum_{n=1}^\infty \frac{H_{n}}{n^{3}}x^{n}$,
$\sum_{n=1}^\infty \frac{H_{n}}{n^{3}}(-x)^{n}$,
$\sum_{n=1}^\infty \frac{H_{n}^{(2)}}{n}x^{n}$,
$\sum_{n=1}^\infty \frac{H_{n}^{(2)}}{n}(-x)^{n}$,
$\sum_{n=1}^\infty \frac{H_{n}^{(2)}}{n^{2}}x^{n}$,
$\sum_{n=1}^\infty \frac{H_{n}^{(2)}}{n^{2}}(-x)^{n}$,
$\sum_{n=1}^\infty \frac{\overline{H}_{n}}{n}x^{n}$,
$\sum_{n=1}^\infty \frac{\overline{H}_{n}^{(2)}}{n}x^{n}$,
$\sum_{n=1}^\infty \frac{\overline{H}_{n}^{(2)}}{n}(-x)^{n}$ in terms of polylogarithm functions. However, it seems difficult to give explicit expressions for infinite series of types
$\sum_{n=1}^\infty \frac{H_{n}}{n^{4}}x^{n}$ and
$\sum_{n=1}^\infty \frac{H_{n}^{(2)}}{n^{3}}x^{n}$, since the integrals $\int_{0}^{x}\frac{\log^{2}(t)\log^{2}(1-t)}{t}\mathrm{d}t$ and $\int_{0}^{x}\frac{\log^{2}(t)Li_{2}(t)}{1-t}\mathrm{d}t$ are not known to be related to the polylogarithm functions, even with the help of a mathematical package. It is interesting to evaluate similar type integrals, e.g., $\int_{0}^{x}\frac{\log^{m}(1-t)}{t^{n}}\mathrm{d}t$. Before going further, We introduce some notations.
\begin{Definition}\label{def1}
For $m, n \in \mathbb N$ with $m \geq n$ and $0 \leq x \leq 1$, define the quantities $A(m,n,x)$, $B(m,n,x)$ and $C(m,n,x)$ as
\begin{align*}
&A(m,n,x):=\int_{0}^{x}\frac{\log^{m}(1-t)}{t^{n}}\mathrm{d}t\,,\quad
B(m,n,x):=\int_{0}^{x}\frac{\log^{m}(1+t)}{t^{n}}\mathrm{d}t\,,\\
&C(m,n,x):=\int_{0}^{x}\frac{\log^{m}(t)}{(1-t)^{n}}\mathrm{d}t\,.
\end{align*}
\end{Definition}

\begin{Lem}\label{lem9}
Let $m \in \mathbb N$ and $0 \leq x \leq 1$, then we have
\begin{align*}
&\quad A(m,1,x)\\
&=\log(x)\log^{m}(1-x)+\sum_{k=0}^{m-2}(-1)^{k}(m-k)_{k+1}\log^{m-k-1}(1-x)Li_{k+2}(1-x)\\
&\quad +(-1)^{m-1}m!Li_{m+1}(1-x)+(-1)^{m}m!\zeta(m+1)\,.
\end{align*}
In particular, we have $A(m,1,1)=(-1)^{m}m!\zeta(m+1)$.
\end{Lem}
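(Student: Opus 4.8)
The plan is to reduce everything to a single telescoping chain of integrations by parts. First I would make the substitution $u=1-t$, which turns the integral into $A(m,1,x)=\int_{1-x}^{1}\frac{\log^{m}(u)}{1-u}\,\mathrm{d}u$, replacing the awkward $\log^{m}(1-t)$ by a pure power of $\log u$ against the kernel $\tfrac{1}{1-u}$. The reason this is the right move is that $\tfrac{1}{1-u}=\frac{\mathrm{d}}{\mathrm{d}u}Li_{1}(u)$ and, more generally, $\frac{\mathrm{d}}{\mathrm{d}u}Li_{p+1}(u)=\frac{Li_{p}(u)}{u}$, so each integration by parts lowers the power of $\log u$ by one while raising the polylogarithm index by one, which is exactly the shape of the right-hand side.

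For the first step I would integrate by parts with antiderivative $Li_{1}(u)=-\log(1-u)$ against $w=\log^{m}(u)$. The boundary term at $u=1-x$ produces exactly $\log(x)\log^{m}(1-x)$, using $Li_{1}(1-x)=-\log(x)$, while the boundary term at $u=1$ is a $0\cdot\infty$ indeterminacy that I would resolve by the local estimate $\log^{m}(u)\sim(u-1)^{m}$ near $u=1$, which dominates the logarithmic blow-up of $Li_{1}$ and forces that term to vanish for $m\ge 1$. This leaves $A(m,1,x)=\log(x)\log^{m}(1-x)-m\int_{1-x}^{1}\frac{Li_{1}(u)\log^{m-1}(u)}{u}\,\mathrm{d}u$.

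Next I would set up the recursion on $I_{p,j}:=\int_{1-x}^{1}\frac{Li_{p}(u)\log^{j}(u)}{u}\,\mathrm{d}u$. Integrating by parts with $\frac{Li_{p}(u)}{u}\,\mathrm{d}u=\mathrm{d}\,Li_{p+1}(u)$ against $\log^{j}(u)$ gives, for $j\ge 1$, the clean recurrence $I_{p,j}=-Li_{p+1}(1-x)\log^{j}(1-x)-j\,I_{p+1,j-1}$, the boundary contribution at $u=1$ now vanishing outright because $\log^{j}(1)=0$. Iterating from $(p,j)=(1,m-1)$ down to $(p,j)=(m,0)$ and bookkeeping the accumulated constants, the coefficient carried by $I_{p,m-p}$ works out to $(-1)^{p}\frac{m!}{(m-p)!}$, so the term $Li_{p+1}(1-x)\log^{m-p}(1-x)$ released at stage $p$ appears with coefficient $(-1)^{p+1}\frac{m!}{(m-p)!}$. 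Writing $p=k+1$ and using $(m-k)_{k+1}=\frac{m!}{(m-k-1)!}$, this coefficient is exactly $(-1)^{k}(m-k)_{k+1}$ with exponent $m-k-1$ on the logarithm, and the range $p=1,\dots,m-1$ reproduces the $k=0,\dots,m-2$ terms of the stated sum. The recursion terminates at $I_{m,0}=\int_{1-x}^{1}\frac{Li_{m}(u)}{u}\,\mathrm{d}u=\zeta(m+1)-Li_{m+1}(1-x)$, whose prefactor $(-1)^{m}m!$ supplies both the $(-1)^{m-1}m!\,Li_{m+1}(1-x)$ and the $(-1)^{m}m!\,\zeta(m+1)$ terms.

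The main obstacle is less the telescoping itself than the careful treatment of the boundary terms at $u=1$: only the very first one, coming from the $Li_{1}$ step, is genuinely indeterminate and must be killed by a limiting argument, whereas every subsequent one vanishes immediately through the factor $\log^{j}(1)=0$. I would also dispatch the two edge cases explicitly. For $m=1$ the displayed sum is empty and the identity collapses to the dilogarithm reflection $Li_{2}(x)+Li_{2}(1-x)=\zeta(2)-\log(x)\log(1-x)$ together with $A(1,1,x)=-Li_{2}(x)$, confirming the normalisation. For the ``in particular'' claim, letting $x\to 1$ sends $\log(x)\log^{m}(1-x)$, each summand, and $Li_{m+1}(1-x)=Li_{m+1}(0)=0$ all to $0$, leaving $A(m,1,1)=(-1)^{m}m!\,\zeta(m+1)$. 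As an independent sanity check one can instead differentiate the right-hand side in $x$ and verify it equals $\tfrac{\log^{m}(1-x)}{x}$ with the correct $x\to 0^{+}$ limit, the cancellation of the spurious $\log(x)$ terms again hinging on the identity $Li_{1}(1-x)=-\log(x)$.
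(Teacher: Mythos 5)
Your proposal is correct and takes essentially the same route as the paper: after the substitution $u=1-t$ (which the paper performs implicitly right after its first integration by parts), both arguments are the identical telescoping chain of integrations by parts that raises the polylogarithm index while lowering the power of $\log u$, producing the same boundary terms $\log(x)\log^{m}(1-x)$, the $Li_{k+2}(1-x)\log^{m-k-1}(1-x)$ sum, and the terminal $(-1)^{m}m!\bigl(\zeta(m+1)-Li_{m+1}(1-x)\bigr)$. Your explicit handling of the indeterminate boundary term at $u=1$ and the coefficient bookkeeping simply spell out details the paper leaves implicit.
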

\begin{proof}
From the definition of $A(m,1,x)$, by using integration by parts, we can write
\begin{align*}
&\quad A(m,1,x)\\
&=\log(x)\log^{m}(1-x)-m \int_{1}^{1-x}\frac{\log(1-t)\log^{m-1}(t)}{t}\mathrm{d}t\\
&=\log(x)\log^{m}(1-x)+m \log^{m-1}(1-x)Li_{2}(1-x)\\
&\quad -m(m-1)\int_{1}^{1-x}\frac{Li_{2}(t)\log^{m-2}(t)}{t}\mathrm{d}t\\
&=\log(x)\log^{m}(1-x)+\sum_{k=0}^{m-2}(-1)^{k}(m-k)_{k+1}\log^{m-k-1}(1-x)Li_{k+2}(1-x)\\
&\quad +(-1)^{m-1}m!Li_{m+1}(1-x)+(-1)^{m}m!Li_{m+1}(1)\,.
\end{align*}
\end{proof}

\begin{Lem}[\cite{xu2}]\label{lem10}
Let $m \in \mathbb N$ and $x \geq 0$, then we have
\begin{align*}
B(m,1,x)
&=\log(x)\log^{m}(1+x)-\frac{m}{m+1}\log^{m+1}(1+x)+m!\zeta(m+1)\\
&\quad -\sum_{i=1}^{m}\binom{m}{i}i!\log^{m-i}(1+x)Li_{i+1}(\frac{1}{1+x})\,.
\end{align*}
In particular, we have
$$
B(m,1,1)
=-\frac{m}{m+1}\log^{m+1}(2)+m!\zeta(m+1)-\sum_{i=1}^{m}\binom{m}{i}i!\log^{m-i}(2)Li_{i+1}(\frac{1}{2})\,.
$$
\end{Lem}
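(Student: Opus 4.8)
The plan is to follow the same two-move strategy used in the proof of Lemma~\ref{lem9}: one integration by parts to remove the outer $\log^m$, followed by a substitution that converts the resulting integrand into polylogarithms. Starting from $B(m,1,x)=\int_0^x \log^m(1+t)\,t^{-1}\,dt$, I would integrate by parts with $dv=dt/t$ and $v=\log t$. The boundary term at $t=0$ vanishes because $\log^m(1+t)\sim t^m$ suppresses the $\log t$ singularity, leaving
\[
B(m,1,x)=\log(x)\log^m(1+x)-m\int_0^x \frac{\log(t)\log^{m-1}(1+t)}{1+t}\,dt.
\]

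Next I would apply the substitution $u=1/(1+t)$, i.e.\ $1+t=1/u$, so that $\log(1+t)=-\log u$, $\log(t)=\log(1-u)-\log(u)$, and $dt/(1+t)=-du/u$; the limits $t:0\to x$ become $u:1\to 1/(1+x)$. This turns the remaining integral into $(-1)^m\int_1^{1/(1+x)}\bigl(\log(1-u)-\log(u)\bigr)u^{-1}\log^{m-1}(u)\,du$, which I would split into two pieces. The $-\log(u)$ piece is elementary, since $\int u^{-1}\log^m(u)\,du=\log^{m+1}(u)/(m+1)$; after inserting the limits and the prefactor $-m$ it produces exactly the term $-\frac{m}{m+1}\log^{m+1}(1+x)$.

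The substantive work is the $\log(1-u)$ piece. Writing $\log(1-u)=-Li_1(u)$ and using $\int u^{-1}Li_k(u)\,du=Li_{k+1}(u)$, I would integrate by parts repeatedly, each step raising the polylogarithm index by one and lowering the power of $\log u$ by one, until the log power is exhausted. This yields a finite sum whose generic term carries the coefficient $(m-1)!/(m-k)!$ times $\log^{m-k}(u)Li_{k+1}(u)$. At the lower limit $u=1/(1+x)$ one uses $\log(1/(1+x))=-\log(1+x)$ and the value $Li_{k+1}(1/(1+x))$; after collecting signs and multiplying by $-m$, the coefficient simplifies via $m\cdot(m-1)!/(m-k)!=m!/(m-k)!=\binom{m}{k}k!$, giving the claimed sum. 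At the upper limit $u=1$ every power $\log^{m-k}(1)$ vanishes except for $k=m$, and the surviving term contributes $m!\,Li_{m+1}(1)=m!\,\zeta(m+1)$.

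The step I expect to be the main obstacle is precisely this last one: tracking the alternating signs and the falling-factorial coefficients through the repeated integration by parts, and verifying that all boundary contributions at $u=1$ collapse to the single constant $m!\zeta(m+1)$ (while confirming that the lower endpoint of the original integration by parts and the $t=0$ endpoint genuinely vanish). Once the coefficient identity $\binom{m}{i}i!=m!/(m-i)!$ is invoked, the three pieces assemble into the stated formula; the special value $B(m,1,1)$ then follows immediately by setting $x=1$, where $\log(x)=0$ and $1/(1+x)=1/2$.
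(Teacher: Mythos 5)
Your argument is correct and is essentially the paper's own proof: the paper performs the same first integration by parts and then uses the identity $\frac{\log t}{1+t}=\frac{\mathrm{d}}{\mathrm{d}t}Li_{2}\big(\frac{1}{1+t}\big)+\frac{\log(1+t)}{1+t}$ followed by repeated integration by parts that raises the polylogarithm index, which is exactly your computation written in the $t$-variable rather than after the substitution $u=1/(1+t)$. The elementary piece $-\frac{m}{m+1}\log^{m+1}(1+x)$, the sum $\sum_{i=1}^{m}\binom{m}{i}i!\log^{m-i}(1+x)Li_{i+1}\big(\frac{1}{1+x}\big)$, and the constant $m!\zeta(m+1)$ arise in the same way in both versions.
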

\begin{proof}
From the definition of $B(m,1,x)$, by using integration by parts, we can write
\begin{align*}
&\quad B(m,1,x)\\
&=\log(x)\log^{m}(1+x)-m \int_{0}^{x}\frac{\log(t)\log^{m-1}(1+t)}{1+t}\mathrm{d}t\\
&=\log(x)\log^{m}(1+x)-m \int_{0}^{x}\log^{m-1}(1+t)\bigg(\frac{\mathrm{d}Li_{2}(\frac{1}{1+t})}{\mathrm{d}t}
+\frac{\log(1+t)}{1+t}\bigg)\mathrm{d}t\\
&=\log(x)\log^{m}(1+x)-\frac{m}{m+1}\log^{m+1}(1+x)-m\log^{m-1}(1+x)Li_{2}(\frac{1}{1+x})\\
&\quad +m(m-1) \int_{0}^{x}\frac{Li_{2}(\frac{1}{1+t})\log^{m-2}(1+t)}{1+t}\mathrm{d}t\\
&=\log(x)\log^{m}(1+x)-\frac{m}{m+1}\log^{m+1}(1+x)+m!Li_{m+1}(1)\\
&\quad -\sum_{i=1}^{m}\binom{m}{i}i!\log^{m-i}(1+x)Li_{i+1}(\frac{1}{1+x})\,.
\end{align*}
\end{proof}

\begin{Prop}\label{prop}
Let $m \in \mathbb N$ and $0 \leq x \leq 1$, then we have
\begin{align*}
&\quad C(m,1,x)\\
&=-\log(1-x)\log^{m}(x)+m\sum_{i=2}^{m+1}(-1)^{i-1}\binom{m-1}{i-2}(i-2)!\log^{m+1-i}(x)Li_{i}(x)\,.
\end{align*}
In particular, we have $C(m,1,1)=(-1)^{m}m!\zeta(m+1)$.
\end{Prop}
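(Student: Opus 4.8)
The plan is to mirror the integration-by-parts strategy used for Lemma \ref{lem9} and Lemma \ref{lem10}, but now anchored on the antiderivative identity $\frac{\mathrm{d}}{\mathrm{d}t}\left(-\log(1-t)\right)=\frac{1}{1-t}$. Starting from the definition $C(m,1,x)=\int_0^x\frac{\log^m(t)}{1-t}\mathrm{d}t$, one integration by parts (taking $u=\log^m(t)$, $\mathrm{d}v=\frac{\mathrm{d}t}{1-t}$) produces the boundary term $-\log(1-x)\log^m(x)$ — the contribution at $t=0$ vanishes since $-\log(1-t)\log^m(t)\sim t\log^m(t)\to 0$ — together with the integral $m\int_0^x\frac{\log(1-t)\log^{m-1}(t)}{t}\mathrm{d}t$. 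Using $\log(1-t)=-Li_1(t)$, this remaining integral becomes $-m\,I_1$, where I define the family
\begin{align*}
I_j:=\int_0^x\frac{Li_j(t)\log^{m-j}(t)}{t}\mathrm{d}t\qquad(1\le j\le m).
\end{align*}

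The heart of the argument is a one-step recurrence for $I_j$. Since $\frac{Li_j(t)}{t}=\frac{\mathrm{d}}{\mathrm{d}t}Li_{j+1}(t)$, integrating by parts with $\mathrm{d}v=\frac{Li_j(t)}{t}\mathrm{d}t$ and $u=\log^{m-j}(t)$ yields
\begin{align*}
I_j=Li_{j+1}(x)\log^{m-j}(x)-(m-j)\,I_{j+1}\qquad(1\le j\le m-1),
\end{align*}
again with no boundary contribution at $t=0$ because $Li_{j+1}(t)\sim t$ kills any power of $\log t$. The base case is $I_m=\int_0^x\frac{Li_m(t)}{t}\mathrm{d}t=Li_{m+1}(x)$. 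Unrolling this telescoping recurrence gives a closed form for $I_1$ in which the coefficient of $Li_{j+1}(x)\log^{m-j}(x)$ is $(-1)^{j-1}(m-1)!/(m-j)!$; the residual term $(-1)^{m-1}(m-1)!\,I_m$ folds neatly into the sum at $j=m$.

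It then remains to substitute $C(m,1,x)=-\log(1-x)\log^m(x)-m\,I_1$ and reindex by $i=j+1$. The sign bookkeeping $-(-1)^{i-2}=(-1)^{i-1}$ and the identity $\binom{m-1}{i-2}(i-2)!=(m-1)!/(m-i+1)!$ convert the falling-factorial coefficients into the stated binomial form, producing exactly the claimed expression. Finally, setting $x=1$ collapses the sum: every term carries a positive power $\log^{m+1-i}(x)$ except the one with $i=m+1$, and the leading boundary term $-\log(1-x)\log^m(x)$ vanishes because $\log^m(x)\sim(x-1)^m$ dominates $\log(1-x)$ as $x\to 1^-$; the surviving $i=m+1$ term evaluates to $(-1)^m m!\,\zeta(m+1)$.

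The main obstacle I anticipate is not conceptual but the careful verification of all the vanishing boundary terms — both the uniform $t=0$ endpoint appearing at every stage of the recurrence and the more delicate $x\to 1^-$ limit — together with accurately threading the factorial coefficients through the unrolling and matching them to the binomial-coefficient normalization used in the statement.
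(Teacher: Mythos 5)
Your proposal is correct and follows essentially the same route as the paper: one integration by parts against $\frac{1}{1-t}$ to produce the boundary term $-\log(1-x)\log^{m}(x)$ and the integral $m\int_{0}^{x}\frac{\log(1-t)\log^{m-1}(t)}{t}\mathrm{d}t$, followed by repeated integration by parts that raises the polylogarithm order one step at a time (your $I_{j}$ recurrence is exactly the iteration the paper carries out implicitly), with the same evaluation at $x=1$. No substantive difference beyond your more explicit bookkeeping of the coefficients and boundary terms.
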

\begin{proof}
From the definition of $C(m,1,x)$, by using integration by parts, we can write
\begin{align*}
&\quad C(m,1,x)\\
&=-\log(1-x)\log^{m}(x)+m \int_{0}^{x}\frac{\log(1-t)\log^{m-1}(t)}{t}\mathrm{d}t\\
&=-\log(1-x)\log^{m}(x)-m \log^{m-1}(x)Li_{2}(x)+m(m-1)\int_{0}^{x}\frac{Li_{2}(t)\log^{m-2}(t)}{t}\mathrm{d}t\\
&=-\log(1-x)\log^{m}(x)+m\sum_{i=2}^{m+1}(-1)^{i-1}\binom{m-1}{i-2}(i-2)!\log^{m+1-i}(x)Li_{i}(x)\,.
\end{align*}
\end{proof}

Now we develop explicit expressions for $A(m,n,x)$, $B(m,n,x)$ and $C(m,n,x)$.
\begin{theorem}\label{maintheorem5}
Let $m, n \in \mathbb N$ with $m \geq n \geq 2$ and $0 < x \leq 1$, then we have
\begin{align*}
&\quad A(m,n,x)\\
&=\sum_{y=0}^{n-2}\binom{m}{y}y!(-1)^{y+1}\sum_{i_{0}=n}^{n}\frac{1}{i_{0}-1}
\sum_{i_{1}=2}^{i_{0}-1}\frac{1}{i_{1}-1}\cdots\sum_{i_{y}=2}^{i_{y-1}-1}\frac{1}{i_{y}-1}
\cdot\frac{\log^{m-y}(1-x)}{x^{i_{y}-1}}\\
&\quad +\sum_{y=0}^{n-2}\binom{m}{y}y!(-1)^{y}\sum_{i_{0}=n}^{n}\frac{1}{i_{0}-1}
\sum_{i_{1}=2}^{i_{0}-1}\frac{1}{i_{1}-1}\cdots\sum_{i_{y}=2}^{i_{y-1}-1}\frac{1}{i_{y}-1}
\log^{m-y}(1-x)\\
&\quad +\sum_{y=0}^{n-2}\binom{m}{y+1}(y+1)!(-1)^{y+1}\sum_{i_{0}=n}^{n}\frac{1}{i_{0}-1}
\sum_{i_{1}=2}^{i_{0}-1}\frac{1}{i_{1}-1}\cdots\sum_{i_{y}=2}^{i_{y-1}-1}\frac{1}{i_{y}-1}\\
&\qquad \times A(m-y-1,1,x)\,,
\end{align*}
and
\begin{align*}
&\quad C(m,n,x)\\
&=\frac{(-1)^{m}m!}{n-1}\sum_{y=0}^{n-2}\zeta(m-y)
\sum_{i_{1}=2}^{i_{0}-1}\frac{1}{i_{1}-1}\cdots\sum_{i_{y}=2}^{i_{y-1}-1}\frac{1}{i_{y}-1}
+\sum_{y=0}^{n-2}\binom{m}{y}y!(-1)^{y}\\
&\quad \times \sum_{i_{0}=n}^{n}\frac{1}{i_{0}-1}
\sum_{i_{1}=2}^{i_{0}-1}\frac{1}{i_{1}-1}\cdots\sum_{i_{y}=2}^{i_{y-1}-1}\frac{1}{i_{y}-1}
\bigg(\frac{\log^{m-y}(x)}{(1-x)^{i_{y}-1}}-\log^{m-y}(x)\bigg)\\
&\quad +\sum_{y=0}^{n-2}\binom{m}{y+1}(y+1)!(-1)^{y}\sum_{i_{0}=n}^{n}\frac{1}{i_{0}-1}
\sum_{i_{1}=2}^{i_{0}-1}\frac{1}{i_{1}-1}\cdots\sum_{i_{y}=2}^{i_{y-1}-1}\frac{1}{i_{y}-1}\\
&\qquad \times A(m-y-1,1,1-x)\,,
\end{align*}
where $A(m-y-1,1,x)$ and $A(m-y-1,1,1-x)$ are given in Lemma \ref{lem9}. In particular, we have
\begin{align*}
A(m,n,1)=C(m,n,1)=\frac{(-1)^{m}m!}{n-1}\sum_{y=0}^{n-2}\zeta(m-y)
\sum_{i_{1}=2}^{i_{0}-1}\frac{1}{i_{1}-1}\cdots\sum_{i_{y}=2}^{i_{y-1}-1}\frac{1}{i_{y}-1}\,.
\end{align*}
\end{theorem}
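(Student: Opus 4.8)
The plan is to reduce both statements to the $n=1$ cases already proved in Lemma \ref{lem9} and Proposition \ref{prop}, by a single integration-by-parts step that lowers the exponent $n$ and the logarithmic power $m$ simultaneously, and then to iterate. For $A(m,n,x)$ with $n\ge 2$ I would integrate by parts with $u=\log^{m}(1-t)$ and $\mathrm{d}v=t^{-n}\,\mathrm{d}t$. The boundary term at $t\to 0$ vanishes, since $\log^{m}(1-t)\sim(-t)^{m}$ and $m\ge n$ forces the prefactor to behave like $t^{m-n+1}\to 0$, so no divergence is introduced. Splitting the remaining integrand by the partial-fraction identity
\begin{align*}
\frac{1}{t^{n-1}(1-t)}=\sum_{j=1}^{n-1}\frac{1}{t^{j}}+\frac{1}{1-t}
\end{align*}
and evaluating $\int_{0}^{x}\log^{m-1}(1-t)/(1-t)\,\mathrm{d}t=-\log^{m}(1-x)/m$, I obtain the one-step recurrence
\begin{align*}
A(m,n,x)=-\frac{\log^{m}(1-x)}{(n-1)x^{n-1}}+\frac{\log^{m}(1-x)}{n-1}-\frac{m}{n-1}\sum_{j=1}^{n-1}A(m-1,j,x)\,,
\end{align*}
where each $A(m-1,j,x)$ with $1\le j\le n-1$ is again convergent because $m-1\ge n-1\ge j$.

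Next I would iterate this recurrence, proceeding by induction on $n$, the case $n=2$ reproducing the statement verbatim through the single term $A(m-1,1,x)$. At each unfolding a node $A(M,N,x)$ emits its two boundary pieces and, through the branch $j=i$, passes to $A(M-1,i,x)$ with a factor $-\tfrac{M}{i-1}$. Hence a path $i_{0}=n,i_{1},\dots,i_{y}$ with $i_{k}\in\{2,\dots,i_{k-1}-1\}$ carries coefficient $(-1)^{y}\binom{m}{y}y!/\prod_{k=0}^{y-1}(i_{k}-1)$, using $\binom{m}{y}y!=m(m-1)\cdots(m-y+1)$. The two boundary pieces $-\log^{m-y}(1-x)/\big((i_{y}-1)x^{i_{y}-1}\big)$ and $\log^{m-y}(1-x)/(i_{y}-1)$ emitted at the node reached along that path produce exactly the first and second sums of the statement, while the terminating branch $j=1$ multiplies $A(m-y-1,1,x)$ by $(-1)^{y+1}\binom{m}{y+1}(y+1)!/\prod_{k=0}^{y}(i_{k}-1)$, giving the third sum. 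The constraint $i_{y}\ge 2$ with $i_{0}=n$ caps the depth at $y=n-2$, fixing the summation ranges.

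For the special value $A(m,n,1)$ the key point is that one must not substitute $x=1$ termwise, since $\log^{m-y}(1-x)$ blows up; instead I would apply the one-step recurrence directly at $x=1$. There the two boundary pieces combine as $\tfrac{\log^{m}(1-x)}{n-1}\big(-x^{-(n-1)}+1\big)$, whose bracket vanishes at $x=1$, leaving the clean recurrence $A(m,n,1)=-\tfrac{m}{n-1}\sum_{j=1}^{n-1}A(m-1,j,1)$. Iterating this with the base value $A(m,1,1)=(-1)^{m}m!\,\zeta(m+1)$ from Lemma \ref{lem9}, the factor $\binom{m}{y+1}(y+1)!\cdot(m-y-1)!=m!$ and the sign $(-1)^{y+1}(-1)^{m-y-1}=(-1)^{m}$ collapse, and the claimed $\zeta$-expansion is read off.

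Finally, for $C(m,n,x)$ I would avoid integrating by parts (which here genuinely diverges at $t=0$ because $\log^{m}t$ is unbounded) and instead use the substitution $t\mapsto 1-t$, which gives the clean identity $C(m,n,x)=A(m,n,1)-A(m,n,1-x)$. Replacing $x$ by $1-x$ in the $A$-formula (so that $\log(1-x)\mapsto\log x$, $x^{i_{y}-1}\mapsto(1-x)^{i_{y}-1}$, and $A(m-y-1,1,x)\mapsto A(m-y-1,1,1-x)$) and subtracting from $A(m,n,1)$ yields the stated expression, the two $\log^{m-y}(x)$ contributions merging into $\big(\log^{m-y}(x)/(1-x)^{i_{y}-1}-\log^{m-y}(x)\big)$; then $C(m,n,1)=A(m,n,1)-A(m,n,0)=A(m,n,1)$ since $A(m,n,0)=0$, which is the last displayed identity. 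I expect the only real difficulty to be the combinatorial bookkeeping of the second step, namely checking that the iterated coefficients assemble precisely into the nested sums $\sum_{i_{0}=n}^{n}\tfrac{1}{i_{0}-1}\cdots\sum_{i_{y}=2}^{i_{y-1}-1}\tfrac{1}{i_{y}-1}$ with the depth truncating at $y=n-2$; the analytic content is entirely in the elementary convergence and boundary-cancellation observations above.
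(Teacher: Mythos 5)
Your proposal is correct and follows essentially the same route as the paper: integration by parts plus the partial-fraction splitting of $1/(t^{n-1}(1-t))$ to obtain the one-step recurrence $A(m,n,x)=-\tfrac{\log^{m}(1-x)}{(n-1)x^{n-1}}+\tfrac{\log^{m}(1-x)}{n-1}-\tfrac{m}{n-1}\sum_{i=1}^{n-1}A(m-1,i,x)$, iterated $n-2$ times, together with $C(m,n,x)=A(m,n,1)-A(m,n,1-x)$. Your added care about the vanishing boundary term at $t\to 0$ and about extracting $A(m,n,1)$ via the recurrence at $x=1$ (rather than a termwise substitution) only fills in details the paper leaves implicit.
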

\begin{proof}
From the definition of $A(m,n,x)$, when $n \geq 2$, by using integration by parts, we can write
\begin{align*}
&\quad A(m,n,x)\\
&=-\frac{1}{n-1}\cdot\frac{\log^{m}(1-x)}{x^{n-1}}
-\frac{m}{n-1}\int_{0}^{x}\frac{\log^{m-1}(1-t)}{t^{n-1}(1-t)}\mathrm{d}t\\
&=-\frac{1}{n-1}\cdot\frac{\log^{m}(1-x)}{x^{n-1}}-\frac{m}{n-1}\sum_{i=1}^{n-1}\int_{0}^{x}\frac{\log^{m-1}(1-t)}{t^{i}}\mathrm{d}t\\
&\quad -\frac{m}{n-1}\int_{0}^{x}\frac{\log^{m-1}(1-t)}{1-t}\mathrm{d}t\\
&=-\frac{1}{n-1}\cdot\frac{\log^{m}(1-x)}{x^{n-1}}+\frac{1}{n-1}\log^{m}(1-x)
-\frac{m}{n-1}\sum_{i=1}^{n-1}A(m-1,i,x)\,,
\end{align*}
successive application of the above relation $n-2$ times, we can obtain that
\begin{align*}
&\quad A(m,n,x)\\
&=-\frac{1}{n-1}\cdot\frac{\log^{m}(1-x)}{x^{n-1}}+\frac{1}{n-1}\log^{m}(1-x)-\frac{m}{n-1}A(m-1,1,x)\\
&\quad +\frac{m}{n-1}\sum_{i_{1}=2}^{n-1}\frac{1}{i_{1}-1}\cdot\frac{\log^{m-1}(1-x)}{x^{i_{1}-1}}
-\frac{m}{n-1}\sum_{i_{1}=2}^{n-1}\frac{1}{i_{1}-1}\log^{m-1}(1-x)\\
&\quad +\frac{m}{n-1}\sum_{i_{1}=2}^{n-1}\frac{m}{i_{1}-1}A(m-2,1,x)
+\frac{m}{n-1}\sum_{i_{1}=2}^{n-1}\frac{m}{i_{1}-1}\sum_{i_{2}=2}^{i_{1}-1}A(m-2,i_{2},x)\\
&=\sum_{y=0}^{n-2}\binom{m}{y}y!(-1)^{y+1}\sum_{i_{0}=n}^{n}\frac{1}{i_{0}-1}
\sum_{i_{1}=2}^{i_{0}-1}\frac{1}{i_{1}-1}\cdots\sum_{i_{y}=2}^{i_{y-1}-1}\frac{1}{i_{y}-1}
\cdot\frac{\log^{m-y}(1-x)}{x^{i_{y}-1}}\\
&\quad +\sum_{y=0}^{n-2}\binom{m}{y}y!(-1)^{y}\sum_{i_{0}=n}^{n}\frac{1}{i_{0}-1}
\sum_{i_{1}=2}^{i_{0}-1}\frac{1}{i_{1}-1}\cdots\sum_{i_{y}=2}^{i_{y-1}-1}\frac{1}{i_{y}-1}
\log^{m-y}(1-x)\\
&\quad +\sum_{y=0}^{n-2}\binom{m}{y+1}(y+1)!(-1)^{y+1}\sum_{i_{0}=n}^{n}\frac{1}{i_{0}-1}
\sum_{i_{1}=2}^{i_{0}-1}\frac{1}{i_{1}-1}\cdots\sum_{i_{y}=2}^{i_{y-1}-1}\frac{1}{i_{y}-1}\\
&\qquad \times A(m-y-1,1,x)\,.
\end{align*}
Note that $C(m,n,x)=A(m,n,1)-A(m,n,1-x)$, thus we get the desired result.
\end{proof}

\begin{theorem}\label{maintheorem6}
Let $m, n \in \mathbb N$ with $m \geq n \geq 2$ and $x > 0$, then we have
\begin{align*}
&\quad B(m,n,x)\\
&=\sum_{y=0}^{n-2}\binom{m}{y}y!\sum_{i_{0}=n}^{n}\frac{1}{i_{0}-1}
\sum_{i_{1}=2}^{i_{0}-1}\frac{1}{i_{1}-1}\cdots\sum_{i_{y}=2}^{i_{y-1}-1}\frac{1}{i_{y}-1}
\cdot\frac{(-1)^{n+i_{y}+y+1}\log^{m-y}(1+x)}{x^{i_{y}-1}}\\
&\quad +\sum_{y=0}^{n-2}\binom{m}{y}y!\sum_{i_{0}=n}^{n}\frac{1}{i_{0}-1}
\sum_{i_{1}=2}^{i_{0}-1}\frac{1}{i_{1}-1}\cdots\sum_{i_{y}=2}^{i_{y-1}-1}\frac{1}{i_{y}-1}
\cdot(-1)^{n+y+1}\log^{m-y}(1+x)\\
&\quad +\sum_{y=0}^{n-2}\binom{m}{y+1}(y+1)!\sum_{i_{0}=n}^{n}\frac{1}{i_{0}-1}
\sum_{i_{1}=2}^{i_{0}-1}\frac{1}{i_{1}-1}\cdots\sum_{i_{y}=2}^{i_{y-1}-1}\frac{1}{i_{y}-1}\\
&\qquad \times (-1)^{n+y}B(m-y-1,1,x)\,,
\end{align*}
where $B(m-y-1,1,x)$ are given in Lemma \ref{lem10}. In particular, we have
\begin{align*}
B(m,n,1)
&=\sum_{y=0}^{n-2}\binom{m}{y}y!\sum_{i_{0}=n}^{n}\frac{1}{i_{0}-1}
\sum_{i_{1}=2}^{i_{0}-1}\frac{1}{i_{1}-1}\cdots\sum_{i_{y}=2}^{i_{y-1}-1}\frac{1}{i_{y}-1}\\
&\qquad \times\log^{m-y}(2)(-1)^{n+y+1}((-1)^{i_{y}}+1)\\
&\quad +\sum_{y=0}^{n-2}\binom{m}{y+1}(y+1)!\sum_{i_{0}=n}^{n}\frac{1}{i_{0}-1}
\sum_{i_{1}=2}^{i_{0}-1}\frac{1}{i_{1}-1}\cdots\sum_{i_{y}=2}^{i_{y-1}-1}\frac{1}{i_{y}-1}\\
&\qquad \times (-1)^{n+y}\bigg\{-\frac{m-y-1}{m-y}\log^{m-y}(2)+(m-y-1)!\zeta(m-y)\\
&\qquad -\sum_{i=1}^{m-y-1}\binom{m-y-1}{i}i!\log^{m-y-1-i}(2)Li_{i+1}(\frac{1}{2})\bigg\}\,.
\end{align*}
\end{theorem}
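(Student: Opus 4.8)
The plan is to follow the proof of Theorem~\ref{maintheorem5} step for step, the only essential new ingredient being that the relevant partial-fraction expansion now produces \emph{alternating} signs. First I would integrate $B(m,n,x)=\int_{0}^{x}\log^{m}(1+t)\,t^{-n}\,\mathrm{d}t$ by parts, differentiating $\log^{m}(1+t)$ and integrating $t^{-n}$ (the boundary term at $0$ vanishes because $m\geq n$), obtaining
$$B(m,n,x)=-\frac{\log^{m}(1+x)}{(n-1)x^{n-1}}+\frac{m}{n-1}\int_{0}^{x}\frac{\log^{m-1}(1+t)}{t^{n-1}(1+t)}\,\mathrm{d}t\,.$$
The key elementary identity is the decomposition
$$\frac{1}{t^{n-1}(1+t)}=\sum_{i=1}^{n-1}\frac{(-1)^{n-1-i}}{t^{i}}+\frac{(-1)^{n-1}}{1+t}\,,$$
which follows from the finite geometric expansion of $1/(1+t)$. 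Substituting this and using $\int_{0}^{x}\log^{m-1}(1+t)/(1+t)\,\mathrm{d}t=\log^{m}(1+x)/m$ yields the recurrence
$$B(m,n,x)=-\frac{\log^{m}(1+x)}{(n-1)x^{n-1}}+\frac{(-1)^{n-1}\log^{m}(1+x)}{n-1}+\frac{m}{n-1}\sum_{i=1}^{n-1}(-1)^{n-1-i}B(m-1,i,x)\,,$$
which is the exact analogue of the recurrence used for $A(m,n,x)$, but with the signs $(-1)^{n-1-i}$ inside the sum and $(-1)^{n-1}$ on the pure-logarithm term.

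Next I would iterate this recurrence $n-2$ times exactly as in Theorem~\ref{maintheorem5}, separating at each stage the $i=1$ term (which produces a base case $B(\cdot,1,\cdot)$, to be evaluated by Lemma~\ref{lem10}) from the $i\geq 2$ terms (which continue the recursion and generate the nested sums $\sum_{i_{1}=2}^{i_{0}-1}\frac{1}{i_{1}-1}\cdots$). The binomial--factorial coefficients $\binom{m}{y}y!=m!/(m-y)!$ and $\binom{m}{y+1}(y+1)!=m!/(m-y-1)!$ arise from the product of the falling factors $m,m-1,\dots$ generated by repeated integration by parts, just as in the $A$-case.

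The one genuinely new point, and the step I expect to be the main obstacle, is the bookkeeping of the accumulated signs. After $y$ applications of the recurrence, the sign attached to the term indexed by $(i_{0},\dots,i_{y})$ is the telescoping product $\prod_{j=0}^{y-1}(-1)^{i_{j}-1-i_{j+1}}=(-1)^{(i_{0}-i_{y})-y}=(-1)^{n+i_{y}+y}$ (using $i_{0}=n$ and $(-1)^{-a}=(-1)^{a}$). Combining this with the sign $-1$ from the leading term $-\log^{m-y}(1+x)/((i_{y}-1)x^{i_{y}-1})$ gives the factor $(-1)^{n+i_{y}+y+1}$ of the first sum; combining it with the factor $(-1)^{i_{y}-1}$ from the pure-logarithm term gives $(-1)^{n+y+1}$ for the second sum; and combining it with $(-1)^{i_{y}}$ from extracting the $i=1$ base-case term at the $(y+1)$-th step gives $(-1)^{n+y}$ for the third sum. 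In the latter two cases the dependence on $i_{y}$ cancels because $2i_{y}$ is even, which is precisely why the second and third sums carry signs independent of $i_{y}$.

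Finally, for the evaluation at $x=1$ I would set $x=1$ in the general formula: the factor $x^{-(i_{y}-1)}$ becomes $1$, so the first and second sums merge into a single term with coefficient $(-1)^{n+y+1}\bigl((-1)^{i_{y}}+1\bigr)\log^{m-y}(2)$, while the third sum is obtained by inserting the closed form of $B(m-y-1,1,1)$ supplied by Lemma~\ref{lem10}. This produces the stated expression for $B(m,n,1)$.
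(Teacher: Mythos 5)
Your proposal is correct and follows essentially the same route as the paper: integration by parts, the partial-fraction expansion $\frac{1}{t^{n-1}(1+t)}=\sum_{i=1}^{n-1}\frac{(-1)^{n-1-i}}{t^{i}}+\frac{(-1)^{n-1}}{1+t}$ giving the recurrence $B(m,n,x)=-\frac{\log^{m}(1+x)}{(n-1)x^{n-1}}+\frac{(-1)^{n-1}}{n-1}\log^{m}(1+x)+\frac{m}{n-1}\sum_{i=1}^{n-1}(-1)^{n-1-i}B(m-1,i,x)$, iterated $n-2$ times with Lemma \ref{lem10} supplying the base cases $B(\cdot,1,\cdot)$. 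Your explicit sign bookkeeping $(-1)^{n+i_y+y+1}$, $(-1)^{n+y+1}$, $(-1)^{n+y}$ and the identification of the coefficients $\binom{m}{y}y!$, $\binom{m}{y+1}(y+1)!$ is accurate and in fact fills in details the paper leaves to the phrase ``similar with Theorem \ref{maintheorem5}.''
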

\begin{proof}
From the definition of $B(m,n,x)$, when $n \geq 2$, by using integration by parts, we can write
\begin{align*}
B(m,n,x)
&=-\frac{1}{n-1}\cdot\frac{\log^{m}(1+x)}{x^{n-1}}
+\frac{m}{n-1}\int_{0}^{x}\frac{\log^{m-1}(1+t)}{t^{n-1}(1+t)}\mathrm{d}t\\
&=-\frac{1}{n-1}\cdot\frac{\log^{m}(1+x)}{x^{n-1}}
+\frac{m}{n-1}\sum_{i=1}^{n-1}(-1)^{n-1-i}\int_{0}^{x}\frac{\log^{m-1}(1+t)}{t^{i}}\mathrm{d}t\\
&\quad +\frac{m}{n-1}\int_{0}^{x}\frac{(-1)^{n-1}\log^{m-1}(1+t)}{1+t}\mathrm{d}t\\
&=-\frac{1}{n-1}\cdot\frac{\log^{m}(1+x)}{x^{n-1}}+\frac{(-1)^{n-1}}{n-1}\log^{m}(1+x)\\
&\quad +\frac{m}{n-1}\sum_{i=1}^{n-1}(-1)^{n-1-i}B(m-1,i,x)\,,
\end{align*}
similar with Theorem \ref{maintheorem5}, successive application of the above relation $n-2$ times, we get the desired result.
\end{proof}

\section{Integrals involving polylogarithms}

In this section, we develop explicit expressions for integrals of types $J_{0}(m,p,x)$, $J_{1}(m,p,x)$, $J(m,p,q)$ and $K(r,p,q)$. Before going further, we introduce some notations and lemmata. Following Flajolet-Salvy's paper \cite{Flajolet}, we write the classical linear Euler sums as
$$
S_{p,q}^{+,+}:=\sum_{n=1}^\infty \frac{H_n^{(p)}}{{n}^{q}}\,.
$$

\begin{Lem}[\cite{xu2,Lirusensen}]\label{lem1}
Let $n, m \in \mathbb N_{0}$ and $x \geq 0$, defining
\begin{align*}
L(n, m, x):=\int_{0}^{x}y^{n}\log^{m}(y) \mathrm{d}y\,,
\end{align*}
then we have
\begin{align*}
L(n, m, x)=\frac{x^{n+1}}{n+1}\sum_{j=0}^{m}\frac{(m+1-j)_{j}}{(n+1)^j}(-1)^{j}\log^{m-j}(x)\,,
\end{align*}
where $(t)_{n}=t(t+1)\cdots(t+n-1)$ is the Pochhammer symbol. In particular, we have $L(n, m, 1)=\frac{m!(-1)^m}{(n+1)^{m+1}}$.
\end{Lem}

\begin{Lem}[\cite{Lirusensen}]\label{lem2}
Let $n, m \in \mathbb N_{0}$ and $0 \leq x \leq 1$, defining
\begin{align*}
M(n, m, x):=\int_{x}^{1}y^{n}\log^{m} (1-y) \mathrm{d}y\,,
\end{align*}
then we have
\begin{align*}
M(n, m, x)=\sum_{j=0}^{n}\binom{n}{j}(-1)^{j}\frac{(1-x)^{j+1}}{j+1}
\sum_{i=0}^{m}\frac{(m+1-i)_{i}}{(j+1)^i}(-1)^{i}\log^{m-i} (1-x)\,.
\end{align*}
In particular, we have $M(n, m, 0)=(-1)^{m}m!\sum_{j=0}^{n}\binom{n}{j}\frac{(-1)^{j}}{(j+1)^{m+1}}$.
\end{Lem}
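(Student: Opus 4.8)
The plan is to reduce $M(n,m,x)$ to the already-evaluated integral $L(\,\cdot\,,m,\,\cdot\,)$ of Lemma \ref{lem1} by a single change of variable. First I would substitute $u=1-y$, which sends the interval $[x,1]$ to $[0,1-x]$ and gives
$$
M(n,m,x)=\int_{0}^{1-x}(1-u)^{n}\log^{m}(u)\,\mathrm{d}u\,.
$$
This is the key move: it trades the awkward factor $\log^{m}(1-y)$ for the plain $\log^{m}(u)$ that appears in $L$, at the cost of turning the monomial $y^{n}$ into the polynomial $(1-u)^{n}$.

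Next I would expand $(1-u)^{n}=\sum_{j=0}^{n}\binom{n}{j}(-1)^{j}u^{j}$ by the binomial theorem and interchange the (finite) sum with the integral, which is unconditionally valid. Each resulting term is exactly
$$
\binom{n}{j}(-1)^{j}\int_{0}^{1-x}u^{j}\log^{m}(u)\,\mathrm{d}u=\binom{n}{j}(-1)^{j}L(j,m,1-x)\,.
$$
Substituting the closed form for $L(j,m,1-x)$ from Lemma \ref{lem1} (with $n\mapsto j$, $x\mapsto 1-x$, and the inner summation index renamed to $i$) reproduces the claimed expression for $M(n,m,x)$ verbatim.

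For the special case $x=0$ I would set $1-x=1$, so that $\log(1-x)=0$; every power $\log^{m-i}(1-x)$ then vanishes except the one with $i=m$. Keeping only that surviving term and using the identity $(1)_{m}=m!$ for the Pochhammer symbol yields $M(n,m,0)=(-1)^{m}m!\sum_{j=0}^{n}\binom{n}{j}(-1)^{j}/(j+1)^{m+1}$.

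I do not expect a serious obstacle here: the argument is essentially one substitution followed by an appeal to Lemma \ref{lem1}. The only points requiring a moment's care are the convergence of the integral at the endpoint $y=1$, where $\log(1-y)$ blows up but $\log^{m}$ remains integrable, so that $M$ is well defined; and the clean index bookkeeping needed to match the Pochhammer factors and the signs in $L(j,m,1-x)$ against the target formula.
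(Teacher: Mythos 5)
Your proof is correct: the substitution $u=1-y$ turns $M(n,m,x)$ into $\int_{0}^{1-x}(1-u)^{n}\log^{m}(u)\,\mathrm{d}u$, the binomial expansion is a finite sum so the interchange is immediate, and each term is $\binom{n}{j}(-1)^{j}L(j,m,1-x)$, which by Lemma \ref{lem1} gives exactly the stated formula; setting $x=0$ kills every term except $i=m$ and $(1)_{m}=m!$ gives the special case. Note that the paper itself offers no proof of this lemma (it is quoted from the author's other work \cite{Lirusensen}), so there is nothing to compare against line by line, but your route --- reduce to $L$ of Lemma \ref{lem1} by one change of variable and a binomial expansion --- is precisely the argument the surrounding material is set up for, and your remarks on integrability at $y=1$ and on the index bookkeeping are the only points of care needed.
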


\begin{Lem}\label{lem3}
Let $n, m \in \mathbb N_{0}$ and $0 \leq x \leq 1$, then we can obtain that
\begin{align*}
&\int_{0}^{x}y^{n}\log^{m} (1-y) \mathrm{d}y\\
&=(-1)^{m}m!\sum_{j=0}^{n}\binom{n}{j}\frac{(-1)^{j}}{(j+1)^{m+1}}\\
&\quad-\sum_{j=0}^{n}\binom{n}{j}(-1)^{j}\frac{(1-x)^{j+1}}{j+1}
\sum_{i=0}^{m}\frac{(m+1-i)_{i}}{(j+1)^i}(-1)^{i}\log^{m-i} (1-x)\,.
\end{align*}
\begin{proof}
Note that
$$
\int_{0}^{x}y^{n}\log^{m} (1-y) \mathrm{d}y=M(n, m, 0)-M(n, m, x)\,,
$$
with the help of Lemma \ref{lem2}, we get the desired result.
\end{proof}
\end{Lem}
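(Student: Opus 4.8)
The plan is to reduce this one-sided integral to the quantity $M(n,m,x)$ already evaluated in Lemma \ref{lem2}, using nothing more than additivity of the integral over $[0,1]$. First I would split
\begin{align*}
\int_{0}^{x}y^{n}\log^{m}(1-y)\,\mathrm{d}y
&=\int_{0}^{1}y^{n}\log^{m}(1-y)\,\mathrm{d}y-\int_{x}^{1}y^{n}\log^{m}(1-y)\,\mathrm{d}y\\
&=M(n,m,0)-M(n,m,x)\,,
\end{align*}
which is legitimate because the integrand has only a logarithmic singularity at $y=1$, and the bounded factor $y^{n}$ keeps it integrable there, so that $M(n,m,0)$ converges.

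Next I would insert the closed form for $M(n,m,x)$ furnished by Lemma \ref{lem2} at both endpoints. Evaluating at $x=0$ collapses to the ``in particular'' form $M(n,m,0)=(-1)^{m}m!\sum_{j=0}^{n}\binom{n}{j}\frac{(-1)^{j}}{(j+1)^{m+1}}$, which supplies the first summand of the claimed identity; evaluating at the general upper limit $x$ reproduces the double sum with the $(1-x)^{j+1}$ factors, giving the second summand with its overall minus sign.

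There is essentially no obstacle here: the statement is a direct transcription of Lemma \ref{lem2} for the complementary integral, so the only points meriting a sentence of care are the convergence of $M(n,m,0)$ at $y=1$ noted above, and the internal consistency check that as $x\to1$ the second summand vanishes (each term carries a factor $(1-x)^{j+1}$ or a positive power of $\log(1-x)$, and the product tends to $0$), so that the identity correctly degenerates to $\int_{0}^{1}y^{n}\log^{m}(1-y)\,\mathrm{d}y=M(n,m,0)$.
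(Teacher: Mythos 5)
Your proposal is correct and is exactly the paper's argument: write $\int_{0}^{x}y^{n}\log^{m}(1-y)\,\mathrm{d}y=M(n,m,0)-M(n,m,x)$ and substitute the closed form of Lemma \ref{lem2} at both arguments. The extra remarks on integrability at $y=1$ and the $x\to1$ consistency check are fine but not needed beyond what the paper already does.
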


\begin{theorem}\label{maintheorem1}
Let $p \in \mathbb N$ and $m \in \mathbb N_{0}$, then we have
\begin{align*}
&J_{0}(m,p,x)\\
&=\sum_{j=2}^{p}\frac{(-1)^{p-j}}{(m+1)^{p+1-j}}x^{m+1}Li_{j}(x)
+\frac{(-1)^{p-1}}{(m+1)^{p-1}}\bigg(\sum_{j=0}^{m}\binom{m}{j}\frac{(-1)^{j}}{(j+1)^{2}}\\
&\quad+\sum_{j=0}^{m}\binom{m}{j}(-1)^{j}\frac{(1-x)^{j+1}}{j+1}
\sum_{i=0}^{1}\frac{(2-i)_{i}}{(j+1)^i}(-1)^{i}\log^{1-i} (1-x)\bigg)\,.
\end{align*}
In particular, $J_{0}(m,p)$ can be reduced to zeta values and harmonic numbers:
$$
J_{0}(m,p)=J_{0}(m,p,1)
=\sum_{j=2}^{p}\frac{(-1)^{p-j}}{(m+1)^{p+1-j}}\zeta(j)+\frac{(-1)^{p-1}}{(m+1)^{p}}H_{m+1}\,.
$$
Note that we have used the fact \cite{Lirusensen}
$$
H_{m+1}=(m+1)\sum_{j=0}^{m}\binom{m}{j}\frac{(-1)^{j}}{(j+1)^{2}}\,.
$$
\end{theorem}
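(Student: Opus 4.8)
The plan is to reduce the variable-limit integral $J_0(m,p,x)$ to its base case $p=1$ through a single integration-by-parts recurrence, and then to invoke Lemma \ref{lem3} to handle that base case explicitly.

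First I would derive the variable-limit analogue of Freitas's recurrence. Writing $J_{0}(m,p,x)=\int_{0}^{x}t^{m}Li_{p}(t)\,\mathrm{d}t$ and integrating by parts with $u=Li_{p}(t)$ and $\mathrm{d}v=t^{m}\,\mathrm{d}t$, using $\frac{\mathrm{d}}{\mathrm{d}t}Li_{p}(t)=Li_{p-1}(t)/t$ and $v=t^{m+1}/(m+1)$, the lower boundary term vanishes (since $Li_{p}(0)=0$), and I obtain for $p\geq 2$
$$
J_{0}(m,p,x)=\frac{x^{m+1}}{m+1}Li_{p}(x)-\frac{1}{m+1}J_{0}(m,p-1,x).
$$
Second, I would unroll this recurrence from $p$ down to $1$. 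Each application contributes a term $\tfrac{x^{m+1}}{m+1}Li_{j}(x)$ and multiplies the remaining integral by $-1/(m+1)$, so after $p-1$ steps the geometric bookkeeping yields
$$
J_{0}(m,p,x)=\sum_{j=2}^{p}\frac{(-1)^{p-j}}{(m+1)^{p+1-j}}x^{m+1}Li_{j}(x)+\frac{(-1)^{p-1}}{(m+1)^{p-1}}J_{0}(m,1,x),
$$
which is confirmed by a short induction on $p$ (the sum being empty and the prefactor equal to $1$ when $p=1$). The sign $(-1)^{p-j}$ and the exponent $p+1-j$ follow at once from substituting $j=p-k$ into $(-1/(m+1))^{k}\cdot x^{m+1}/(m+1)$.

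Third, I would evaluate the base case. Since $Li_{1}(t)=-\log(1-t)$, we have $J_{0}(m,1,x)=-\int_{0}^{x}t^{m}\log(1-t)\,\mathrm{d}t$. Applying Lemma \ref{lem3} with the power of $y$ equal to $m$ and the power of $\log(1-y)$ equal to $1$ (so that the factor $(-1)^{m}m!$ of the lemma specializes to $-1$), the overall minus sign flips both the leading $\sum_{j=0}^{m}\binom{m}{j}(-1)^{j}/(j+1)^{2}$ term and the remaining $(1-x)^{j+1}$ sum, producing exactly the bracketed expression in the statement. Substituting this into the unrolled identity gives the claimed formula for $J_{0}(m,p,x)$. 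For the ``in particular'' assertion I would set $x=1$: every term carrying a factor $(1-x)^{j+1}$ vanishes while $Li_{j}(1)=\zeta(j)$, leaving $J_{0}(m,p)=\sum_{j=2}^{p}\frac{(-1)^{p-j}}{(m+1)^{p+1-j}}\zeta(j)+\frac{(-1)^{p-1}}{(m+1)^{p-1}}\sum_{j=0}^{m}\binom{m}{j}(-1)^{j}/(j+1)^{2}$, and the quoted identity $H_{m+1}=(m+1)\sum_{j=0}^{m}\binom{m}{j}(-1)^{j}/(j+1)^{2}$ converts the last term into $\frac{(-1)^{p-1}}{(m+1)^{p}}H_{m+1}$.

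I do not anticipate a serious obstacle, as the argument is a clean one-step recurrence together with its unrolling. The only points requiring genuine care are matching the base case $J_{0}(m,1,x)$ to Lemma \ref{lem3} under the index clash (the lemma's exponent names are transposed relative to the theorem's $m$), and checking that the sign arising from $Li_{1}=-\log(1-t)$ combines correctly with the $(-1)^{m}m!$ factor of the lemma at log-power one.
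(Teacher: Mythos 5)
Your proposal is correct and follows essentially the same route as the paper: the integration-by-parts recurrence $J_{0}(m,p,x)=\frac{x^{m+1}}{m+1}Li_{p}(x)-\frac{1}{m+1}J_{0}(m,p-1,x)$, its unrolling down to $J_{0}(m,1,x)=-\int_{0}^{x}t^{m}\log(1-t)\,\mathrm{d}t$, and Lemma \ref{lem3} for the base case, with the same specialization at $x=1$ via the $H_{m+1}$ identity. Your sign bookkeeping for $Li_{1}(t)=-\log(1-t)$ against the lemma's $(-1)^{m}m!$ factor is handled correctly.
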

\begin{proof}
By using integration by parts, we have
\begin{align*}
&\quad J_{0}(m,p,x)\\
&=\frac{1}{m+1}x^{m+1} Li_{p}(x)-\frac{1}{m+1}\int_{0}^{x}t^{m} Li_{p-1}(t)\mathrm{d}t\\
&=\frac{1}{m+1}x^{m+1} Li_{p}(x)-\frac{1}{m+1}J_{0}(m,p-1,x)\\
&=\frac{1}{m+1}x^{m+1} Li_{p}(x)-\frac{1}{(m+1)^{2}}x^{m+1} Li_{p-1}(x)+\frac{1}{(m+1)^{2}}J_{0}(m,p-2,x)\\
&=\sum_{j=2}^{p}\frac{(-1)^{p-j}}{(m+1)^{p+1-j}}x^{m+1}Li_{j}(x)+\frac{(-1)^{p-1}}{(m+1)^{p-1}}J_{0}(m,1,x)\,.
\end{align*}
Note that
$$
J_{0}(m,1,x)=-\int_{0}^{x}t^{m} \log(1-t)\mathrm{d}t\,,
$$
with the help of Lemma \ref{lem3}, we get the desired result.
\end{proof}

\begin{Lem}\label{lem4}
Let $m \in \mathbb N_{0}$, then we have
\begin{align*}
J_{1}(m,0,x)
&=x\sum_{j=0}^{m}(m+1-j)_{j}(-1)^{j+1}\log^{m-j}(x)-\log(1-x)\log^{m}(x)\\
&\quad+m\sum_{j=2}^{m+1}(-1)^{j-1}\binom{m-1}{j-2}(j-2)!Li_{j}(x)\log^{m+1-j}(x)\,.
\end{align*}
\begin{proof}
Note that
\begin{align*}
J_{1}(m,0,x)
&=-\int_{0}^{x}\log^{m}(t)\mathrm{d}t+\int_{0}^{x}\frac{\log^{m}(t)}{1-t}\mathrm{d}t\,.
\end{align*}
with the help of Lemma \ref{lem1} and Proposition \ref{prop}, we get the desired result.
\end{proof}
\end{Lem}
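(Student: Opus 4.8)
The plan is to exploit the fact that the index $p=0$ makes the polylogarithm degenerate into an elementary rational function, after which the integral collapses onto results already established in Section~2. Since $Li_{0}(t)=\sum_{n=1}^{\infty}t^{n}=\frac{t}{1-t}$ for $0\le t<1$, the integrand $\log^{m}(t)\,Li_{0}(t)$ is not a genuine polylogarithmic object, and the algebraic identity $\frac{t}{1-t}=-1+\frac{1}{1-t}$ separates it into two pieces each of which has already been evaluated.

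First I would record the decomposition
\begin{align*}
J_{1}(m,0,x)&=\int_{0}^{x}\log^{m}(t)\,\frac{t}{1-t}\,\mathrm{d}t\\
&=-\int_{0}^{x}\log^{m}(t)\,\mathrm{d}t+\int_{0}^{x}\frac{\log^{m}(t)}{1-t}\,\mathrm{d}t\,.
\end{align*}
The first integral is exactly $L(0,m,x)$ in the notation of Lemma~\ref{lem1}; specializing that lemma to $n=0$ gives $L(0,m,x)=x\sum_{j=0}^{m}(m+1-j)_{j}(-1)^{j}\log^{m-j}(x)$, so that $-L(0,m,x)$ supplies the first sum in the claimed formula. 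The second integral is precisely $C(m,1,x)$ from Definition~\ref{def1}, whose closed form is furnished by Proposition~\ref{prop}; this contributes the term $-\log(1-x)\log^{m}(x)$ together with $m\sum_{j=2}^{m+1}(-1)^{j-1}\binom{m-1}{j-2}(j-2)!\,\log^{m+1-j}(x)Li_{j}(x)$ after renaming the summation index. Summing the two contributions reproduces the asserted identity term by term.

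I do not expect a serious obstacle, since the whole argument rests on the single observation that $Li_{0}$ is rational and everything downstream is a direct citation of Lemma~\ref{lem1} and Proposition~\ref{prop}. The only points needing care are bookkeeping ones: matching the Pochhammer and binomial index ranges between the cited results and the target, and checking endpoint behaviour. Near $t=0$ the factor $\log^{m}(t)$ is integrable, so no difficulty arises there; near $t=1$ one finds $\log^{m}(t)/(1-t)\sim(-1)^{m}(1-t)^{m-1}$, which is integrable precisely when $m\ge1$, so the identity is an equality of functions on $[0,1)$ that extends to $x=1$ only for $m\ge1$ (for $m=0$ both sides diverge as $x\to1^{-}$, consistently). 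Finally, using $Li_{0}(t)=t/(1-t)$ under the integral sign is justified by the uniform convergence of the geometric series on compact subsets of $[0,1)$.
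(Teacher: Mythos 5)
Your argument is correct and is essentially identical to the paper's proof: the same decomposition $Li_{0}(t)=t/(1-t)=-1+1/(1-t)$, with the first integral evaluated by Lemma \ref{lem1} (as $-L(0,m,x)$) and the second recognized as $C(m,1,x)$ from Proposition \ref{prop}. Your additional remarks on endpoint behaviour at $x=1$ for $m=0$ are a harmless refinement not present in the paper.
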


\begin{theorem}\label{maintheorem2}
Let $p \in \mathbb N$ and $m \in \mathbb N_{0}$, then we have
\begin{align*}
&J_{1}(m,p,x)\\
&=\sum_{y=1}^{p}\sum_{i_{1}=0}^{m-1}\cdots\sum_{i_{y}=0}^{m-i_{1}-\cdots-i_{y-1}-1}
m(m-1)\cdots(m-i_{1}-\cdots-i_{y}+1)\\
&\qquad \times(-1)^{i_{1}+\cdots+i_{y}+y-1} x Li_{p-y+1}(x)\log^{m-i_{1}-\cdots-i_{y}}(x)\\
&\quad +\sum_{y=1}^{p}\sum_{i_{1}=0}^{m-1}\cdots\sum_{i_{y-1}=0}^{m-i_{1}-\cdots-i_{y-2}-1}
m!(-1)^{i_{1}+\cdots+i_{y-1}+y-1}J_{1}(0,p-y+1,x)\\
&\quad +\sum_{i_{1}=0}^{m-1}\cdots\sum_{i_{p}=0}^{m-i_{1}-\cdots-i_{p-1}-1}
m(m-1)\cdots(m-i_{1}-\cdots-i_{p}+1)\\
&\qquad \times(-1)^{i_{1}+\cdots+i_{p}+p}J_{1}(m-i_{1}-\cdots-i_{p},0,x)\,,
\end{align*}
where $J_{1}(0,p-y+1,x)=J_{0}(0,p-y+1,x)$ are given Theorem \ref{maintheorem1} and $J_{1}(m-i_{1}-\cdots-i_{p},0,x)$ are given in Lemma \ref{lem4}.
\end{theorem}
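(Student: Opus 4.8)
The plan is to set up a one-step recursion in the polylogarithm index $p$ by integration by parts and then iterate it $p$ times, bookkeeping the resulting nested sums. First I would write $J_1(m,p,x)=\int_0^x \log^m(t)Li_p(t)\,dt$ and integrate by parts with $u=Li_p(t)$ and $dv=\log^m(t)\,dt$, so that $v=L(0,m,t)$ is the antiderivative evaluated in Lemma \ref{lem1} and $du=\frac{Li_{p-1}(t)}{t}\,dt$ since $Li_p'(t)=Li_{p-1}(t)/t$. The boundary term at $t=0$ vanishes because $Li_p(0)=0$ and $t\log^k t\to 0$, leaving $J_1(m,p,x)=Li_p(x)L(0,m,x)-\int_0^x \frac{L(0,m,t)}{t}Li_{p-1}(t)\,dt$. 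Using the explicit form of $L(0,m,t)$ from Lemma \ref{lem1}, the ratio $L(0,m,t)/t=\sum_{j=0}^m \frac{m!}{(m-j)!}(-1)^j\log^{m-j}(t)$ is a polynomial in $\log t$, so the remaining integral breaks into a sum of lower integrals and I obtain the recursion
\begin{equation*}
J_1(m,p,x)=Li_p(x)L(0,m,x)-\sum_{j=0}^m \frac{m!}{(m-j)!}(-1)^j J_1(m-j,p-1,x).
\end{equation*}

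Next I would iterate this recursion on the index $p$. Each application lowers $p$ by one while smearing the log exponent from $m$ down through the values $m-i_1,\ m-i_1-i_2,\dots$, so that after $y$ applications a generic summand is indexed by a multi-index $(i_1,\dots,i_y)$. The key simplification is that the product of falling factorials produced along a path telescopes, $\prod_{k}\frac{(m-i_1-\cdots-i_{k-1})!}{(m-i_1-\cdots-i_k)!}=\frac{m!}{(m-i_1-\cdots-i_y)!}$, which collapses to the single falling factorial $m(m-1)\cdots(m-i_1-\cdots-i_y+1)$ appearing in the statement, while the sign accumulates one factor of $-1$ at each step together with one factor $(-1)^{i_k}$ from each copy of $L(0,\cdot,t)/t$. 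I would then classify the terms by how the iteration terminates: a factor that reaches $\log^0$ lands on an integral $J_1(0,p-y+1,x)=J_0(0,p-y+1,x)$ evaluated in Theorem \ref{maintheorem1}; after $p$ full reductions we reach $J_1(\cdot,0,x)$, evaluated in Lemma \ref{lem4}; and the boundary terms $Li_{p-y+1}(x)L(0,\cdot,x)$ carrying a positive power of $\log x$ assemble into the first displayed sum, with accumulated sign $(-1)^{i_1+\cdots+i_y+y-1}$.

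The main obstacle is the careful reconciliation of the constant ($\log^0$) boundary contributions with the terminal integrals. At each stage the top index value $i_y=m-i_1-\cdots-i_{y-1}$ produces both a pure boundary term proportional to $x\,Li_{p-y+1}(x)$ and an integral term of the form $J_1(0,p-y,x)$; these must be merged using the identity $J_1(0,q,x)=x\,Li_q(x)-J_1(0,q-1,x)$, which is itself the $m=0$ case of the recursion, to produce the $J_1(0,p-y+1,x)$ terms of the second sum. Getting every sign and every summation range consistent through this collapse — distinguishing the range $i_y\le m-i_1-\cdots-i_{y-1}-1$, which governs the surviving boundary terms and the continued polylog reductions, from the top value $i_y=m-i_1-\cdots-i_{y-1}$, which feeds the terminal $J_1(0,\cdot,x)$ pieces — is the delicate part, and I would organize it as an induction on $p$. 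I would pin down the signs on the base cases $p=1$ and small $m$, cross-checking against a direct series expansion of $\int_0^x \log^m(t)Li_p(t)\,dt$ term by term, before asserting the general formula.
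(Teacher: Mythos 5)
Your strategy is essentially the paper's: obtain a one-step integration-by-parts recursion for $J_{1}$ and iterate it, sending the surviving integrals to the terminal cases $J_{1}(0,q,x)=J_{0}(0,q,x)$ (Theorem \ref{maintheorem1}) and $J_{1}(\cdot,0,x)$ (Lemma \ref{lem4}). The only real difference is the recursion itself: the paper differentiates $t\log^{m}(t)Li_{p}(t)$ to get $J_{1}(m,p,x)=x\,Li_{p}(x)\log^{m}(x)-J_{1}(m,p-1,x)-m\,J_{1}(m-1,p,x)$ and first drives $m$ down to $0$ (producing the $(-1)^{m}m!\,J_{1}(0,p,x)$ terms) before lowering $p$, whereas you lower $p$ in one stroke by integrating against $L(0,m,t)$ from Lemma \ref{lem1}; the two become identical after rewriting $(-1)^{m}m!\,J_{1}(0,p,x)$ via $J_{1}(0,p,x)=x\,Li_{p}(x)-J_{1}(0,p-1,x)$, which is exactly the merging identity you flag, so your route buys a more compact one-step relation at the cost of that extra repackaging. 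One substantive remark: the base-case sign verification you propose is not a formality. Tracking the coefficient of $J_{1}(0,p-y+1,x)$ through either recursion gives $m!\,(-1)^{m+y-1}$ for every admissible $(i_{1},\dots,i_{y-1})$ --- for instance $m=1$, $p=1$ forces $J_{1}(1,1,x)=x\,Li_{1}(x)\log(x)-J_{1}(0,1,x)-J_{1}(1,0,x)$ --- so the sign of that family of terms depends on $m$, and the exponent $i_{1}+\cdots+i_{y-1}+y-1$ appearing in the second displayed sum of the statement agrees with it only when $i_{1}+\cdots+i_{y-1}$ has the same parity as $m$; carrying out your induction on $p$ together with the small-case checks is precisely what is needed to pin this down, and the bookkeeping you defer to that step is the one place where the argument can silently go wrong.
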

\begin{proof}
By using integration by parts, we can obtain that
\begin{align*}
&\quad J_{1}(m,p,x)\\
&=x Li_{p}(x)\log^{m}(x)-\int_{0}^{x}\log^{m}(t) Li_{p-1}(t)\mathrm{d}t-m \int_{0}^{x}\log^{m-1}(t) Li_{p}(t)\mathrm{d}t\\
&=x Li_{p}(x)\log^{m}(x)-J_{1}(m,p-1,x)-m J_{1}(m-1,p,x)\\
&=x Li_{p}(x)\log^{m}(x)-m x Li_{p}(x)\log^{m-1}(x)-J_{1}(m,p-1,x)\\
&\quad +m J_{1}(m-1,p-1,x)+m(m-1) J_{1}(m-2,p,x)\\
&=\sum_{i=0}^{m-1}\binom{m}{i}i!(-1)^{i} x Li_{p}(x)\log^{m-i}(x)+(-1)^{m}m!J_{1}(0,p,x)\\
&\quad +\sum_{i=0}^{m-1}\binom{m}{i}i!(-1)^{i+1} J_{1}(m-i,p-1,x)\\
&=\sum_{i_{1}=0}^{m-1}\binom{m}{i_{1}}i_{1}!(-1)^{i_{1}} x Li_{p}(x)\log^{m-i_{1}}(x)+(-1)^{m}m!J_{1}(0,p,x)\\
&\quad +\sum_{i_{1}=0}^{m-1}\binom{m}{i_{1}}i_{1}!(-1)^{i_{1}+1}\bigg\{(-1)^{m-i_{1}}(m-i_{1})!J_{1}(0,p-1,x)\\
&\quad +\sum_{i_{2}=0}^{m-i_{1}-1}
\binom{m-i_{1}}{i_{2}}i_{2}!(-1)^{i_{2}} x Li_{p-1}(x)\log^{m-i_{1}-i_{2}}(x)\\
&\quad +\sum_{i_{2}=0}^{m-i_{1}-1}
\binom{m-i_{1}}{i_{2}}i_{2}!(-1)^{i_{2}+1} J_{1}(m-i_{1}-i_{2},p-2,x)\bigg\}\\
&=\sum_{y=1}^{p}\sum_{i_{1}=0}^{m-1}\cdots\sum_{i_{y}=0}^{m-i_{1}-\cdots-i_{y-1}-1}
m(m-1)\cdots(m-i_{1}-\cdots-i_{y}+1)\\
&\qquad \times(-1)^{i_{1}+\cdots+i_{y}+y-1} x Li_{p-y+1}(x)\log^{m-i_{1}-\cdots-i_{y}}(x)\\
&\quad +\sum_{y=1}^{p}\sum_{i_{1}=0}^{m-1}\cdots\sum_{i_{y-1}=0}^{m-i_{1}-\cdots-i_{y-2}-1}
m!(-1)^{i_{1}+\cdots+i_{y-1}+y-1}J_{1}(0,p-y+1,x)\\
&\quad +\sum_{i_{1}=0}^{m-1}\cdots\sum_{i_{p}=0}^{m-i_{1}-\cdots-i_{p-1}-1}
m(m-1)\cdots(m-i_{1}-\cdots-i_{p}+1)\\
&\qquad \times(-1)^{i_{1}+\cdots+i_{p}+p}J_{1}(m-i_{1}-\cdots-i_{p},0,x)\,.
\end{align*}
\end{proof}

\begin{Lem}[{{Abel's lemma on summation by parts} \cite{Abel,Chu}}]\label{lem5}
Let $\{f_k\}$ and $\{g_k\}$ be two sequences, and define the forward difference and backward difference, respectively, as
$$
\Delta\tau_k=\tau_{k+1}-\tau_k\quad\hbox{and}\quad \nabla\tau_k=\tau_k-\tau_{k-1}\,,
$$
then, there holds the relation:
\begin{align*}
\sum_{k=1}^\infty f_k\nabla g_k=\lim_{n\to\infty}f_n g_n-f_1 g_0-\sum_{k=1}^\infty g_k \Delta f_k \,.
\end{align*}
\end{Lem}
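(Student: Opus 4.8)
The plan is to reduce the infinite identity to a finite one and then pass to the limit, so the real work is a discrete analogue of integration by parts. First I would fix $n$ and examine the partial sum
\begin{align*}
\sum_{k=1}^{n} f_k\nabla g_k=\sum_{k=1}^{n} f_k(g_k-g_{k-1})\,,
\end{align*}
with the aim of isolating boundary terms and a residual sum against $\Delta f_k$.

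The key step is to regroup this finite sum according to the index of $g$. Writing out the terms $f_1(g_1-g_0)+f_2(g_2-g_1)+\cdots+f_n(g_n-g_{n-1})$ and collecting the coefficient of each $g_j$, I find that $g_0$ occurs only with coefficient $-f_1$, that $g_n$ occurs only with coefficient $f_n$, and that each intermediate $g_j$ with $1\le j\le n-1$ occurs with coefficient $f_j-f_{j+1}=-\Delta f_j$. This yields the finite summation-by-parts formula
\begin{align*}
\sum_{k=1}^{n} f_k\nabla g_k=f_n g_n-f_1 g_0-\sum_{j=1}^{n-1} g_j\Delta f_j\,.
\end{align*}
The same identity can alternatively be obtained by telescoping the discrete product rule $\Delta(f_k g_k)=g_k\Delta f_k+f_{k+1}\nabla g_{k+1}$ and shifting indices, but the direct regrouping is the most transparent route and requires no reindexing tricks.

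Finally I would let $n\to\infty$. Under the tacit hypotheses that the limit $\lim_{n\to\infty}f_n g_n$ exists and that the series $\sum_k f_k\nabla g_k$ and $\sum_j g_j\Delta f_j$ converge, the boundary term tends to $\lim_{n\to\infty}f_n g_n$, the term $-f_1 g_0$ is unaffected, and the truncated sum $\sum_{j=1}^{n-1} g_j\Delta f_j$ tends to $\sum_{j=1}^{\infty} g_j\Delta f_j$, producing exactly the asserted relation. I do not expect any genuine obstacle here: the entire content lies in the elementary regrouping of the finite sum, and the only point needing care is to record the convergence assumptions that legitimize the passage to the limit.
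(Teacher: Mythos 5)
Your proof is correct: regrouping the partial sum $\sum_{k=1}^{n} f_k\nabla g_k$ by collecting the coefficient of each $g_j$ to get $f_n g_n-f_1 g_0-\sum_{j=1}^{n-1} g_j\Delta f_j$, and then letting $n\to\infty$ under the tacit assumptions that $\lim_{n\to\infty}f_n g_n$ exists and the two series converge, is exactly the classical Abel summation-by-parts argument. Note that the paper does not prove this lemma at all --- it is stated as a known result with citations to Abel and Chu --- so there is no in-paper proof to diverge from; your derivation is the standard one from the cited sources, and you are right to make explicit the convergence hypotheses that the paper's statement leaves implicit.
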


We now provide a criterion concerning the exchange of summation and integral for improper integrals.
\begin{Lem}\label{exchange}
Given a series of functions $\sum_{n=1}^\infty u_{n}(x), a\leq x \leq b$ with $u_{n}(x)\geq 0$, $\sum_{n=1}^\infty u_{n}(b)=\infty$ and $\sum_{n=1}^\infty u_{n}(x)$ converges for $a \leq x < b$. Suppose $u_{n}(x)$ is integrable (Riemann integrable or integrable as an improper integral) on $[a, b]$, the improper integral $\int_{a}^{b}\sum_{n=1}^\infty u_{n}(x)\mathrm{d}x$ converges, and $\sum_{n=1}^\infty u_{n}(x)$ internally closed uniform converges on $[a, b)$, i.e. for any $a\leq c < d <b$, $\sum_{n=1}^\infty u_{n}(x)$ converges uniformly on $[c, d]$, then we can exchange summation and integral, i.e.
$$
\int_{a}^{b}\sum_{n=1}^\infty u_{n}(x)\mathrm{d}x=\sum_{n=1}^\infty \int_{a}^{b} u_{n}(x)\mathrm{d}x\,.
$$
\end{Lem}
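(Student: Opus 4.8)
The plan is to prove the identity by a squeeze argument driven entirely by nonnegativity. Write $S_N(x):=\sum_{n=1}^N u_n(x)$ and $S(x):=\sum_{n=1}^\infty u_n(x)$, and set $I_n:=\int_a^b u_n(x)\,\mathrm{d}x$ and $I:=\int_a^b S(x)\,\mathrm{d}x$. I would establish the two inequalities $\sum_{n=1}^\infty I_n\le I$ and $I\le\sum_{n=1}^\infty I_n$ separately and then combine them. The hypothesis $u_n\ge 0$ is what makes both halves go through: it forces $S_N(x)$ to be nondecreasing in $N$ with $0\le S_N(x)\le S(x)$ on $[a,b)$, and it makes every truncated integral $\int_c^d u_n$ a lower bound for $I_n$.

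For the first inequality I would fix $N$. Linearity of the convergent improper integral gives $\int_a^b S_N=\sum_{n=1}^N I_n$, while the comparison $0\le S_N\le S$ together with the assumed convergence of $I$ yields $\int_a^b S_N\le I$ (a convergent majorant dominates). Hence $\sum_{n=1}^N I_n\le I$ for every $N$, and letting $N\to\infty$ gives $\sum_{n=1}^\infty I_n\le I$.

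For the reverse inequality I would retreat to a compact subinterval $[c,d]$ with $a<c<d<b$, on which each $u_n$ is genuinely (properly) Riemann integrable and, by the internally closed uniform convergence, $S_N\to S$ uniformly. The uniform-limit theorem for proper integrals then permits the exchange $\int_c^d S=\sum_{n=1}^\infty\int_c^d u_n$. Since $u_n\ge 0$ forces $\int_c^d u_n\le I_n$, this produces $\int_c^d S\le\sum_{n=1}^\infty I_n$, with the bound independent of $c$ and $d$. Letting $c\to a^+$ and $d\to b^-$ and invoking the convergence of the improper integral $I$ gives $I\le\sum_{n=1}^\infty I_n$. Combining the two inequalities finishes the proof.

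The hard part is the interchange of the two limiting processes: the infinite summation $N\to\infty$ and the improper-integral limit toward the singular endpoint $b$. Uniform convergence licenses the summation–integration swap only on compact pieces bounded away from $b$, and by the hypothesis $\sum_{n=1}^\infty u_n(b)=\infty$ it must break down at $b$ itself, so one cannot simply carry a uniform estimate up to the endpoint. The point to get right is that nonnegativity replaces the missing uniform control near $b$: monotonicity converts the problem into a one-sided squeeze of monotone-convergence type, in which the finite value $I$ serves as the dominating quantity, so no uniformity at $b$ is ever required.
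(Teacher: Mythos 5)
Your proposal is correct, and it reaches the conclusion by a different organization of the same three hypotheses. The paper proves the limit $\sum_{n=1}^{M}\int_a^b u_n \to \int_a^b S$ directly with a single $\epsilon/3$-estimate: it splits $[a,b]$ as $[a,b-\delta]\cup[b-\delta,b]$, uses convergence of the improper integral to make $\int_{b-\delta}^{b}S$ small, uses uniform convergence on $[a,b-\delta]$ to control the truncation error there, and uses nonnegativity only once, to dominate $\int_{b-\delta}^{b}\sum_{n=1}^{M}u_n$ by $\int_{b-\delta}^{b}S$. You instead run a two-sided squeeze: the upper bound $\sum_{n=1}^{N}I_n=\int_a^b S_N\le I$ comes from the pointwise comparison $S_N\le S$ over the whole interval, and the lower bound $I\le\sum_n I_n$ comes from the uniform-convergence exchange on compact subintervals $[c,d]$ followed by exhaustion $c\to a^{+}$, $d\to b^{-}$. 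What your route buys is conceptual clarity (a monotone-convergence--style argument with no explicit $\epsilon$ bookkeeping, and the finiteness of $\sum_n I_n$ appears as an explicit intermediate step); what the paper's route buys is a single quantitative estimate tying the truncation error to the tail of the improper integral and the uniform tail bound. Two small remarks: the hypothesis already allows $c=a$, so your retreat to $a<c$ is unnecessary (though harmless), and your parenthetical claim that each $u_n$ is properly Riemann integrable on $[c,d]$ is not literally guaranteed by the hypotheses; this does not matter, since on an interval of finite length the exchange $\int_c^d S=\sum_n\int_c^d u_n$ follows from the sup-norm bound $\bigl\lvert\int_c^d(S-S_N)\bigr\rvert\le (d-c)\sup_{[c,d]}\lvert S-S_N\rvert$ as soon as $S$ and the $u_n$ are (possibly improperly) integrable there, which is exactly the level of rigor the paper itself uses.
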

\begin{proof}
Since the improper integral $\int_{a}^{b}\sum_{n=1}^\infty u_{n}(x)\mathrm{d}x$ converges, then for any $\epsilon >0$, there exists $\delta >0$, s.t. $\lvert \int_{b-\delta}^{b}\sum_{n=1}^\infty u_{n}(x)\mathrm{d}x \rvert\leq\frac{\epsilon}{3}$. It is not hard to see that we can choose $\delta >0$ such that $b-\delta-a>0$. Note that, $\sum_{n=1}^\infty u_{n}(x)$ converges uniformly on $[a, b-\delta]$, then for fixed $\epsilon, \delta >0$, there exists $N > 0$, for any $M \geq N$, we have
$$
\left\lvert \sum_{n=M+1}^\infty u_{n}(x) \right\rvert\leq\frac{\epsilon}{3(b-\delta-a)}\quad x\in[a, b-\delta]\,.
$$
Thus we can obtain that
\begin{align*}
&\quad \left\lvert \sum_{n=1}^{M} \int_{a}^{b}u_{n}(x)\mathrm{d}x-\int_{a}^{b} \sum_{n=1}^\infty u_{n}(x)\mathrm{d}x \right\rvert\\
&\leq\left\lvert \int_{a}^{b-\delta}\sum_{n=1}^{M} u_{n}(x)-\sum_{n=1}^{\infty} u_{n}(x)\mathrm{d}x\right\rvert+\left\lvert\int_{b-\delta}^{b} \sum_{n=1}^{M} u_{n}(x)\mathrm{d}x \right\rvert+\left\lvert\int_{b-\delta}^{b} \sum_{n=1}^{\infty} u_{n}(x)\mathrm{d}x \right\rvert\\
&\leq(b-\delta-a)\frac{\epsilon}{3(b-\delta-a)}+\frac{\epsilon}{3}+\frac{\epsilon}{3}\\
&=\epsilon\,,
\end{align*}
which completes the proof.
\end{proof}

\begin{Lem}\label{lem6}
Let $m \in \mathbb N_{0}$ and $p \in \mathbb N$, then we have
\begin{align*}
&\quad J(m,p,1)\\
&=\sum_{j=2}^{p}(-1)^{p-j}\zeta(j)\bigg(\sum_{i=2}^{p+1-j}\frac{-1}{(m+1)^{p+2-j-i}}\zeta(i)
+\sum_{i=1}^{p+1-j}\frac{1}{(m+1)^{p+2-j-i}}H_{m+1}^{(i)}\bigg)\\
&\quad +(-1)^{p-1}\bigg\{\sum_{i=2}^{p}\frac{-1}{(m+1)^{p-i+1}}\bigg((1+\frac{i}{2})\zeta(i+1)
-\frac{1}{2}\sum_{k=1}^{i-2}\zeta(k+1)\zeta(i-k)\\
&\qquad -\sum_{n=1}^{m+1}\frac{H_{n}}{n^{i}}\bigg)+\frac{1}{(m+1)^{p}}
\bigg(H_{m+1}^{2}+\sum_{b=0}^{m}\frac{H_{m+1}-H_{b}}{m+1-b}\bigg)\bigg\}\,.
\end{align*}
\end{Lem}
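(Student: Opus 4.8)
The plan is to expand the factor $Li_{1}(t)=-\log(1-t)=\sum_{k=1}^{\infty}t^{k}/k$ and integrate termwise, so as to reduce $J(m,p,1)$ to the integrals $J_{0}(\cdot,p)$ already evaluated in Theorem \ref{maintheorem1}. Setting $u_{k}(t):=t^{m+k}Li_{p}(t)/k\ge 0$ on $[0,1]$, the hypotheses of Lemma \ref{exchange} hold (the partial sums converge for $0\le t<1$, $\sum_{k\ge 1}u_{k}(1)=\infty$, the improper integral $\int_{0}^{1}t^{m}Li_{p}(t)(-\log(1-t))\,\mathrm{d}t$ converges, and the series converges internally-closed-uniformly on $[0,1)$), so that
\[
J(m,p,1)=\sum_{k=1}^{\infty}\frac{1}{k}\int_{0}^{1}t^{m+k}Li_{p}(t)\,\mathrm{d}t=\sum_{k=1}^{\infty}\frac{J_{0}(m+k,p)}{k}.
\]
Substituting the explicit value of $J_{0}(m+k,p)$ from Theorem \ref{maintheorem1} and interchanging the finite $j$-sum with the $k$-sum decomposes $J(m,p,1)$ into a zeta part $\sum_{j=2}^{p}(-1)^{p-j}\zeta(j)\,V_{p+1-j}$, where $V_{s}:=\sum_{k\ge 1}k^{-1}(m+k+1)^{-s}$, and a harmonic part $(-1)^{p-1}W$, where $W:=\sum_{k\ge 1}H_{m+k+1}\,k^{-1}(m+k+1)^{-p}$.

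Both $V_{s}$ and $W$ I would treat with the single partial-fraction identity (with $c:=m+1$)
\[
\frac{1}{k(k+c)^{s}}=\frac{1}{c^{s}k}-\sum_{t=1}^{s}\frac{1}{c^{\,s-t+1}(k+c)^{t}}.
\]
For $V_{s}$ the $1/k$ term and the $t=1$ term combine into $\sum_{k\ge 1}(1/k-1/(k+c))=H_{m+1}$, whereas for $t\ge 2$ one uses $\sum_{k\ge 1}(k+c)^{-t}=\zeta(t)-H_{m+1}^{(t)}$. This gives $V_{s}=H_{m+1}(m+1)^{-s}+\sum_{t=2}^{s}(H_{m+1}^{(t)}-\zeta(t))(m+1)^{-(s-t+1)}$; after absorbing the leading term as the $i=1$ summand $H_{m+1}^{(1)}=H_{m+1}$, this is precisely the parenthesis multiplying $(-1)^{p-j}\zeta(j)$ in the statement, reproducing the first displayed sum.

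Applying the same decomposition to $W$, the terms with $t\ge 2$ produce the tails $\sum_{k\ge 1}H_{m+k+1}(m+k+1)^{-t}=\sum_{\ell\ge m+2}H_{\ell}/\ell^{t}=S_{1,t}^{+,+}-\sum_{\ell=1}^{m+1}H_{\ell}/\ell^{t}$; inserting Euler's evaluation $S_{1,t}^{+,+}=(1+\tfrac{t}{2})\zeta(t+1)-\tfrac12\sum_{k=1}^{t-2}\zeta(k+1)\zeta(t-k)$ yields the first brace of the $(-1)^{p-1}\{\cdots\}$ term. The surviving $1/k$ and $t=1$ contributions merge into $(m+1)^{-p}U$, where $U:=\sum_{k\ge 1}H_{m+k+1}(1/k-1/(k+m+1))$.

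The main obstacle is the evaluation $U=H_{m+1}^{2}+\sum_{b=0}^{m}(H_{m+1}-H_{b})/(m+1-b)$ together with the rigorous regrouping of the several individually divergent subseries: each splitting above separates a convergent series into divergent pieces, so it must be carried out on truncated sums and then passed to the limit, the tail contributions cancelling. I would obtain $U$ from Abel's summation by parts (Lemma \ref{lem5}) with $f_{k}=H_{m+k+1}$ and $\nabla g_{k}=1/k-1/(k+m+1)$, whence $g_{k}=H_{k}-H_{m+k+1}+H_{m+1}$ (with $g_{0}=0$) and $\Delta f_{k}=1/(m+k+2)$; the boundary limit $\lim_{k}f_{k}g_{k}$ and the series $\sum_{k}g_{k}\Delta f_{k}$ individually diverge, but their difference is finite and, after simplification, equals the stated finite double-harmonic expression. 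Collecting the zeta part, the $t\ge 2$ harmonic tails, and $(m+1)^{-p}U$ then gives the asserted formula.
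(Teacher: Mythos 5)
Your proposal follows essentially the same route as the paper's proof: expand $Li_{1}(t)=\sum_{k\ge 1}t^{k}/k$, exchange sum and integral via Lemma \ref{exchange}, insert the evaluation of $J_{0}(m+k,p)$ from Theorem \ref{maintheorem1}, treat the two resulting series by partial fractions, invoke the Flajolet--Salvy formula for $S_{1,i}^{+,+}$, and finish the remaining sum $\sum_{k\ge1}H_{m+k+1}\left(\frac{1}{k}-\frac{1}{k+m+1}\right)$ by Abel summation (Lemma \ref{lem5}). The only real difference is your normalization $g_{0}=0$ in the Abel step, which makes the boundary term and the transformed series individually divergent and forces the truncation-and-cancellation argument you describe; the paper instead takes $g_{n}=\frac{1}{n+1}+\cdots+\frac{1}{n+m+1}$ (equivalently, shifts your $g_{k}$ by $-H_{m+1}$), so that $g_{n}\to 0$, Lemma \ref{lem5} applies with all quantities finite, and the double-harmonic expression drops out directly.
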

\begin{proof}
From the definition of $J(m,p,1)$, we can write
\begin{align*}
J(m,p,1)
&=\sum_{n=1}^\infty \frac{1}{n} \int_{0}^{1}x^{m+n} Li_{p}(x)\mathrm{d}x\\
&=\sum_{j=2}^{p}(-1)^{p-j}\zeta(j)\sum_{n=1}^\infty \frac{1}{n(m+n+1)^{p+1-j}}+\sum_{n=1}^\infty \frac{(-1)^{p-1}H_{m+n+1}}{n(m+n+1)^{p}}\,.
\end{align*}
For the first part, by using fraction expansion, we have
\begin{align*}
&\quad \sum_{n=1}^\infty \frac{1}{n(m+n+1)^{p+1-j}}\\
&=\sum_{n=1}^\infty \bigg(\sum_{i=2}^{p+1-j}\frac{-1}{(m+1)^{p+2-j-i}}\cdot\frac{1}{(n+m+1)^{i}}
+\frac{1}{(m+1)^{p-j}}\cdot\frac{1}{n(n+m+1)}\bigg)\\
&=\sum_{i=2}^{p+1-j}\frac{-1}{(m+1)^{p+2-j-i}}\bigg(\zeta(i)-H_{m+1}^{(i)}\bigg)
+\frac{1}{(m+1)^{p-j+1}}H_{m+1}\\
&=\sum_{i=2}^{p+1-j}\frac{-1}{(m+1)^{p+2-j-i}}\zeta(i)
+\sum_{i=1}^{p+1-j}\frac{1}{(m+1)^{p+2-j-i}}H_{m+1}^{(i)}\,.
\end{align*}
For the second part, by using fraction expansion, we have
\begin{align*}
&\quad \sum_{n=1}^\infty \frac{H_{m+n+1}}{n(m+n+1)^{p}}\\
&=\sum_{n=1}^\infty H_{m+n+1}\bigg(\sum_{i=2}^{p}\frac{-1}{(m+1)^{p-i+1}}\cdot\frac{1}{(n+m+1)^{i}}
+\frac{1}{(m+1)^{p-1}}\cdot\frac{1}{n(n+m+1)}\bigg)\\
&=\sum_{i=2}^{p}\frac{-1}{(m+1)^{p-i+1}}\bigg(\sum_{n=1}^\infty \frac{H_{n}}{n^{i}}-\sum_{n=1}^{m+1} \frac{H_{n}}{n^{i}}\bigg)+\frac{1}{(m+1)^{p-1}}\sum_{n=1}^\infty\frac{H_{m+n+1}}{n(n+m+1)}\,.
\end{align*}
Note that
\begin{align*}
S_{1,i}^{+,+}=\sum_{n=1}^\infty \frac{H_{n}}{n^{i}}=(1+\frac{i}{2})\zeta(i+1)
-\frac{1}{2}\sum_{k=1}^{i-2}\zeta(k+1)\zeta(i-k)\,.\quad \cite{Flajolet}
\end{align*}
Set
$$
f_{n}:=H_{n+m+1}\quad\hbox{and}\quad
g_{n}:=\frac{1}{n+1}+\cdots+\frac{1}{n+m+1}\,,
$$
by using Lemma \ref{lem5}, we have
\begin{align*}
&\quad -\sum_{n=1}^\infty \frac{(m+1) H_{n+m+1}}{n(n+m+1)}\\
&=\sum_{n=1}^\infty H_{n+m+1}\left(\bigg(\frac{1}{n+1}+\cdots+\frac{1}{n+m+1}\bigg)-\bigg(\frac{1}{n}+\cdots+\frac{1}{n+m}\bigg)\right)\\
&=-H_{m+2}\bigg(\frac{1}{1}+\cdots+\frac{1}{1+m}\bigg)-\sum_{n=1}^\infty \bigg(\frac{1}{n+1}+\cdots+\frac{1}{n+m+1}\bigg)\frac{1}{n+m+2}\\
&=-H_{m+1}^{2}-\sum_{n=1}^\infty \sum_{b=0}^{m}\frac{1}{n+b}\cdot\frac{1}{n+m+1}\\
&=-H_{m+1}^{2}-\sum_{b=0}^{m}\frac{1}{m+1-b}\bigg(H_{m+1}-H_{b}\bigg)\,.
\end{align*}
Combining the above results, we get the desired result.
\end{proof}

\begin{Remark}
In the proof of the above theorem, we exchange the order of summation and integration, i.e.
\begin{align*}
\int_{0}^{1}x^{m}\sum_{n=1}^\infty \frac{x^{n}}{n} Li_{p}(x)\mathrm{d}x=\sum_{n=1}^\infty \frac{1}{n} \int_{0}^{1}x^{m+n} Li_{p}(x)\mathrm{d}x\,.
\end{align*}
To verify this, we only need to note that for any $0<\delta<1$, $\frac{x^{m+n}}{n} Li_{p}(x)$ is monotonic increasing on the interval $[0, 1-\delta]$ and the series $\sum_{n=1}^\infty \frac{x^{m+n}}{n} Li_{p}(x)$ converges when $x=1-\delta$. With the help of Lemma \ref{exchange}, we get the desired result. The other cases can be checked in a similar manner.
\end{Remark}

Now we provide another formula for $J(m,p,1)$.
\begin{Lem}\label{lem7}
Let $m \in \mathbb N_{0}$ and $p \in \mathbb N$, then we have
\begin{align*}
J(m,p,1)
&=\sum_{i=2}^{p}\frac{(-1)^{p-i}}{(m+1)^{p-i+1}}\bigg((1+\frac{i}{2})\zeta(i+1)
-\frac{1}{2}\sum_{k=1}^{i-2}\zeta(k+1)\zeta(i-k)\\
&\quad +\sum_{k=2}^{i}(-1)^{i-k}\zeta(k)H_{m+1}^{(i-k+1)}
+\sum_{j=1}^{m+1}\frac{(-1)^{i-1}}{j^{i}}H_{j}\bigg)\\
&\quad +\frac{(-1)^{p-1}}{(m+1)^{p}}
\bigg(H_{m+1}^{2}+\sum_{b=0}^{m}\frac{H_{m+1}-H_{b}}{m+1-b}\bigg)\,.
\end{align*}
\end{Lem}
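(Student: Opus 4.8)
The plan is to derive a representation of $J(m,p,1)$ dual to the one used in the proof of Lemma \ref{lem6}: there $Li_{1}$ was expanded while $Li_{p}$ was kept inside $J_{0}$, whereas here I would expand $Li_{p}(x)=\sum_{n\ge1}x^{n}/n^{p}$ and keep $Li_{1}(x)=-\log(1-x)$ intact. After justifying the interchange of summation and integration exactly as in the Remark following Lemma \ref{lem6} (every summand $x^{m+n}n^{-p}(-\log(1-t))$ is nonnegative on $[0,1]$, so Lemma \ref{exchange} applies) and using the elementary evaluation $\int_{0}^{1}x^{N}(-\log(1-x))\,\mathrm{d}x=H_{N+1}/(N+1)$ with $N=m+n$, I obtain
\begin{align*}
J(m,p,1)=\sum_{n=1}^{\infty}\frac{H_{m+n+1}}{n^{p}(n+m+1)}\,.
\end{align*}

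I would then apply the partial fraction expansion
\begin{align*}
\frac{1}{n^{p}(n+m+1)}=\sum_{i=1}^{p}\frac{(-1)^{p-i}}{(m+1)^{p-i+1}}\cdot\frac{1}{n^{i}}+\frac{(-1)^{p}}{(m+1)^{p}}\cdot\frac{1}{n+m+1}\,,
\end{align*}
which splits $J(m,p,1)$ into the convergent pieces $\sum_{i=2}^{p}\frac{(-1)^{p-i}}{(m+1)^{p-i+1}}\sum_{n\ge1}H_{m+n+1}n^{-i}$ together with the $i=1$ term and the residual $(n+m+1)^{-1}$ term, which individually diverge and must be paired. Using $\frac{1}{n}-\frac{1}{n+m+1}=\frac{m+1}{n(n+m+1)}$, these two collapse to $\frac{(-1)^{p-1}}{(m+1)^{p-1}}\sum_{n\ge1}\frac{H_{m+n+1}}{n(n+m+1)}$, and the very summation-by-parts computation (Lemma \ref{lem5}) already carried out in the proof of Lemma \ref{lem6} evaluates this last series, yielding precisely the term $\frac{(-1)^{p-1}}{(m+1)^{p}}\big(H_{m+1}^{2}+\sum_{b=0}^{m}\frac{H_{m+1}-H_{b}}{m+1-b}\big)$ of the claimed identity.

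It remains to evaluate the shifted linear Euler sum $\sum_{n\ge1}H_{m+n+1}n^{-i}$ for $2\le i\le p$. I would write $H_{m+n+1}=H_{n}+\sum_{l=1}^{m+1}\frac{1}{n+l}$; the first piece contributes $S_{1,i}^{+,+}=\sum_{n\ge1}H_{n}/n^{i}$, equal to $(1+\frac{i}{2})\zeta(i+1)-\frac{1}{2}\sum_{k=1}^{i-2}\zeta(k+1)\zeta(i-k)$ by the Flajolet--Salvy formula already quoted from \cite{Flajolet}. For the second piece I would interchange the finite and infinite sums and apply the partial fraction expansion $\frac{1}{n^{i}(n+l)}=\sum_{k=1}^{i}\frac{(-1)^{i-k}}{l^{i-k+1}}\frac{1}{n^{k}}+\frac{(-1)^{i}}{l^{i}}\frac{1}{n+l}$; again the $k=1$ and residual terms must be grouped, collapsing to $\frac{(-1)^{i-1}}{l^{i}}H_{l}$, while the $k\ge2$ terms give $\sum_{k=2}^{i}\frac{(-1)^{i-k}}{l^{i-k+1}}\zeta(k)$. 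Summing over $l=1,\dots,m+1$ and swapping the order of summation in the $\zeta$-part produces exactly $\sum_{k=2}^{i}(-1)^{i-k}\zeta(k)H_{m+1}^{(i-k+1)}+\sum_{j=1}^{m+1}\frac{(-1)^{i-1}}{j^{i}}H_{j}$. Substituting this into the $i$-sum of the previous paragraph assembles the stated formula.

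The delicate points, where I expect the bulk of the care to go, are the repeated regroupings of divergent series: both the $i=1$ term against the residual $(n+m+1)^{-1}$ term and each $k=1$ term against its residual $(n+l)^{-1}$ term must be paired before passing to the limit, since the separate series diverge. Everything else is routine partial-fraction manipulation and index bookkeeping, requiring no analytic input beyond Lemma \ref{lem5}, Lemma \ref{exchange}, the Flajolet--Salvy evaluation of $S_{1,i}^{+,+}$, and the single Abel-summation computation borrowed from the proof of Lemma \ref{lem6}.
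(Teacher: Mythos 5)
Your proposal is correct and follows essentially the same route as the paper's own proof: expand $Li_{p}$, keep $-\log(1-x)$ intact to get $J(m,p,1)=\sum_{n\ge1}H_{m+n+1}/\big(n^{p}(n+m+1)\big)$, apply the same partial-fraction splittings (with the $i=1$ and residual terms kept paired as $1/(n(n+m+1))$, and likewise for $1/(n^{i}(n+j))$), use the Flajolet--Salvy value of $S_{1,i}^{+,+}$, and reuse the Abel-summation evaluation of $\sum_{n}H_{m+n+1}/(n(n+m+1))$ from Lemma \ref{lem6}. The only difference is that you make explicit the regrouping of divergent pieces and the appeal to Lemma \ref{exchange}, which the paper handles implicitly.
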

\begin{proof}
From the definition of $J(m,p,1)$, we can write
\begin{align*}
J(m,p,1)
&=-\sum_{n=1}^\infty \frac{1}{n^{p}} \int_{0}^{1}x^{m+n} \log(1-x)\mathrm{d}x\\
&=\sum_{n=1}^\infty \frac{1}{n^{p}} \frac{H_{m+n+1}}{m+n+1}\\
&=\sum_{n=1}^\infty H_{m+n+1}\bigg(\sum_{i=2}^{p}\frac{(-1)^{p-i}}{(m+1)^{p-i+1}}\cdot\frac{1}{n^{i}}
+\frac{(-1)^{p-1}}{(m+1)^{p-1}}\cdot\frac{1}{n(n+m+1)}\bigg)\,.
\end{align*}
For the first part, by using fraction expansion, we have
\begin{align*}
&\quad \sum_{n=1}^\infty \frac{H_{m+n+1}}{n^{i}}\\
&=\sum_{n=1}^\infty \frac{H_{n}}{n^{i}}
+\sum_{j=1}^{m+1}\sum_{n=1}^\infty \frac{1}{n^{i}(n+j)}\\
&=\sum_{n=1}^\infty \frac{H_{n}}{n^{i}}+\sum_{j=1}^{m+1}\sum_{n=1}^\infty \bigg(\sum_{k=2}^{i}\frac{(-1)^{i-k}}{j^{i-k+1}}\cdot\frac{1}{n^{k}}
+\frac{(-1)^{i-1}}{j^{i-1}}\cdot\frac{1}{n(n+j)}\bigg)\\
&=\sum_{n=1}^\infty \frac{H_{n}}{n^{i}}+\sum_{k=2}^{i}(-1)^{i-k}\zeta(k)H_{m+1}^{(i-k+1)}
+\sum_{j=1}^{m+1}\frac{(-1)^{i-1}}{j^{i}}H_{j}\,.
\end{align*}
For the second part, from the previous lemma we know that
\begin{align*}
&\quad \sum_{n=1}^\infty \frac{(m+1) H_{n+m+1}}{n(n+m+1)}
=H_{m+1}^{2}+\sum_{b=0}^{m}\frac{1}{m+1-b}\bigg(H_{m+1}-H_{b}\bigg)\,.
\end{align*}
Combining the above results, we get the desired result.
\end{proof}

When $p=1$, we have
$$
J(m,1,1)=\frac{1}{m+1}
\bigg(H_{m+1}^{2}+\sum_{b=0}^{m}\frac{H_{m+1}-H_{b}}{m+1-b}\bigg)\,.
$$
It is known that \cite{Devoto}
$$
J(m,1,1)=\frac{2}{m+1}\bigg(H_{m+1}^{(2)}+\sum_{k=1}^{m}\frac{H_{k}}{k+1}\bigg)\,,
$$
thus we have the following proposition:
\begin{Prop}
\begin{align*}
H_{m+1}^{(2)}
&=\frac{1}{2}\bigg(\sum_{j=0}^{m}\binom{m+1}{j+1}\frac{(-1)^{j}}{j+1}\bigg)^{2}
+\frac{1}{2}\sum_{b=0}^{m}\frac{1}{m+1-b}\bigg\{\sum_{j=0}^{m}\binom{m+1}{j+1}\frac{(-1)^{j}}{j+1}\\
&\quad -\sum_{j=0}^{b-1}\binom{b}{j+1}\frac{(-1)^{j}}{j+1}\bigg\}
-\sum_{k=1}^{m}\frac{1}{k+1}\sum_{j=0}^{k-1}\binom{k}{j+1}\frac{(-1)^{j}}{j+1}\,.
\end{align*}
\end{Prop}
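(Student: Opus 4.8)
The plan is to obtain the identity by equating the two closed forms for $J(m,1,1)$ that are already recorded immediately before the statement. Specializing Lemma \ref{lem7} at $p=1$ gives
$$
J(m,1,1)=\frac{1}{m+1}\bigg(H_{m+1}^{2}+\sum_{b=0}^{m}\frac{H_{m+1}-H_{b}}{m+1-b}\bigg)\,,
$$
while the evaluation attributed to Devoto \cite{Devoto} gives
$$
J(m,1,1)=\frac{2}{m+1}\bigg(H_{m+1}^{(2)}+\sum_{k=1}^{m}\frac{H_{k}}{k+1}\bigg)\,.
$$
First I would set these two right-hand sides equal, cancel the common factor $1/(m+1)$, and solve the resulting linear equation for $H_{m+1}^{(2)}$. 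This produces
$$
H_{m+1}^{(2)}=\frac{1}{2}H_{m+1}^{2}+\frac{1}{2}\sum_{b=0}^{m}\frac{H_{m+1}-H_{b}}{m+1-b}-\sum_{k=1}^{m}\frac{H_{k}}{k+1}\,,
$$
which already exhibits the exact three-term shape of the claimed formula.

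The only remaining step is a change of representation for the harmonic numbers. I would invoke the classical identity $H_{n}=\sum_{j=0}^{n-1}\binom{n}{j+1}\frac{(-1)^{j}}{j+1}$, which is precisely the fact $H_{m+1}=(m+1)\sum_{j=0}^{m}\binom{m}{j}\frac{(-1)^{j}}{(j+1)^{2}}$ already quoted after Theorem \ref{maintheorem1}, since $(m+1)\binom{m}{j}/(j+1)^{2}=\binom{m+1}{j+1}/(j+1)$. Substituting this representation for $H_{m+1}$ in the squared term, for both $H_{m+1}$ and $H_{b}$ inside the $b$-sum, and for $H_{k}$ in the final sum converts the three terms above into exactly the three terms of the proposition.

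The main obstacle, such as it is, is bookkeeping with summation ranges rather than any analytic content: one checks that the binomial representation behaves correctly at the boundary (for $b=0$ the inner sum $\sum_{j=0}^{-1}$ is empty, consistent with $H_{0}=0$), and that the index shift $j\mapsto j+1$ in $H_{n}=\sum_{j=1}^{n}\binom{n}{j}(-1)^{j-1}/j$ aligns with the upper limits $m$, $b-1$ and $k-1$ appearing in the statement. Once these ranges are matched, the identity follows immediately with no further computation.
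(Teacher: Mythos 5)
Your proposal is correct and is exactly the paper's (implicit) argument: the Proposition is stated immediately after the two evaluations of $J(m,1,1)$ precisely because it follows by equating them, cancelling $1/(m+1)$, solving for $H_{m+1}^{(2)}$, and rewriting $H_{m+1}$, $H_b$, $H_k$ via the binomial representation $H_n=\sum_{j=0}^{n-1}\binom{n}{j+1}\frac{(-1)^j}{j+1}$, which is equivalent to the quoted fact $H_{m+1}=(m+1)\sum_{j=0}^{m}\binom{m}{j}\frac{(-1)^j}{(j+1)^2}$. Your boundary check ($b=0$ giving an empty sum, consistent with $H_0=0$) is also the right detail to verify; nothing further is needed.
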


Now we give explicit expression for $J(-2,p,1)$.
\begin{Lem}\label{lem8}
Let $p \in \mathbb N$, then we have
\begin{align*}
J(-2,p,1)
&=\zeta(p+1)+\zeta(2)+\sum_{j=2}^{p}\zeta(j)\bigg(1+\sum_{i=2}^{p+1-j}(-1)^{1+i}\zeta(i)\bigg)\\
&\quad +\sum_{i=2}^{p}(-1)^{1+i}\bigg((1+\frac{i}{2})\zeta(i+1)
-\frac{1}{2}\sum_{k=1}^{i-2}\zeta(k+1)\zeta(i-k)\bigg)\\
&=2\zeta(2)-\sum_{i=2}^{p}\frac{i}{2}\zeta(i+1)+\frac{1}{2}\sum_{i=3}^{p}\sum_{k=1}^{i-2}\zeta(k+1)\zeta(i-k)\,.
\end{align*}
\end{Lem}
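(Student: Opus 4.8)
The plan is to reduce $J(-2,p,1)=\int_{0}^{1}x^{-2}Li_p(x)Li_1(x)\,\mathrm{d}x$ to a convergent Euler-type sum by expanding one of the two polylogarithm factors as its defining power series and integrating term by term. The two displayed formulas correspond precisely to the two natural choices of which factor to expand, so proving the Lemma amounts to carrying out both reductions; once each is shown to equal $J(-2,p,1)$, the stated chain of equalities is automatic and no direct algebraic passage from the first expression to the second is required.

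For the first (unsimplified) expression I would mimic the method of Lemma \ref{lem6}: write $Li_1(x)=\sum_{n\geq 1}x^{n}/n$ and exchange summation and integration to obtain $J(-2,p,1)=\sum_{n\geq 1}\frac{1}{n}\int_{0}^{1}x^{n-2}Li_p(x)\,\mathrm{d}x$. The index $n=1$ must be isolated, since it falls outside the range of Theorem \ref{maintheorem1}; here $\int_{0}^{1}x^{-1}Li_p(x)\,\mathrm{d}x=Li_{p+1}(1)=\zeta(p+1)$, producing the leading term. For $n\geq 2$ the inner integral is exactly $J_0(n-2,p)$, which Theorem \ref{maintheorem1} evaluates as a combination of $\zeta(j)/(n-1)^{p+1-j}$ and $H_{n-1}/(n-1)^{p}$. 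Summing over $n\geq 2$ after the substitution $k=n-1$ leaves the two series $\sum_{k\geq 1}\frac{1}{k^{p+1-j}(k+1)}$ and $\sum_{k\geq 1}\frac{H_k}{k^{p}(k+1)}$; the partial-fraction identity $\frac{1}{k^{r}(k+1)}=\sum_{s=2}^{r}(-1)^{r-s}k^{-s}+(-1)^{r-1}\big(\frac{1}{k}-\frac{1}{k+1}\big)$ turns the first into alternating combinations of $\zeta$-values and the second into $\sum_{s=2}^{p}(-1)^{p-s}S_{1,s}^{+,+}+(-1)^{p-1}\zeta(2)$. Inserting the Flajolet--Salvy evaluation of $S_{1,s}^{+,+}$ quoted in Lemma \ref{lem6} assembles exactly the first displayed formula.

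For the second (simplified) expression I would instead expand $Li_p(x)=\sum_{n\geq 1}x^{n}/n^{p}$, giving $J(-2,p,1)=-\sum_{n\geq 1}\frac{1}{n^{p}}\int_{0}^{1}x^{n-2}\log(1-x)\,\mathrm{d}x$. Again the singular index $n=1$ is separated, contributing $-\int_{0}^{1}\frac{\log(1-x)}{x}\,\mathrm{d}x=\zeta(2)$, while for $n\geq 2$ the elementary moment gives $\int_{0}^{1}x^{n-2}\log(1-x)\,\mathrm{d}x=-H_{n-1}/(n-1)$, so that $J(-2,p,1)=\zeta(2)+\sum_{k\geq 1}\frac{H_k}{k(k+1)^{p}}$. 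The clean partial fraction $\frac{1}{k(k+1)^{p}}=\frac{1}{k(k+1)}-\sum_{i=2}^{p}\frac{1}{(k+1)^{i}}$ then splits this into $\sum_{k\geq 1}\frac{H_k}{k(k+1)}$, which telescopes to $\zeta(2)$, and the tails $\sum_{k\geq 1}\frac{H_k}{(k+1)^{i}}=S_{1,i}^{+,+}-\zeta(i+1)$; the Euler-sum formula collapses each tail to $\frac{i}{2}\zeta(i+1)-\frac{1}{2}\sum_{k=1}^{i-2}\zeta(k+1)\zeta(i-k)$, and summing over $i$ produces the second displayed formula.

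The two genuinely delicate points are analytic rather than algebraic. First, the termwise integration must be justified, since the integrand is singular as $x\to 1^{-}$; I would appeal to Lemma \ref{exchange} exactly as in the Remark following Lemma \ref{lem6}, using the monotonicity of $x^{n-2}Li_p(x)$ and of $x^{n-2}\log(1-x)$ on each interval $[0,1-\delta]$ together with convergence of $\int_{0}^{1}x^{-2}Li_p(x)Li_1(x)\,\mathrm{d}x$. Second, the index $n=1$ is truly exceptional in both reductions, because Theorem \ref{maintheorem1} and the moment $\int_{0}^{1}x^{k}\log(1-x)\,\mathrm{d}x=-H_{k+1}/(k+1)$ both require a nonnegative exponent; isolating this term and evaluating the two $1/x$-type integrals as $\zeta(p+1)$ and $-\zeta(2)$ respectively is the step where an error would most easily creep in. Deriving the two expressions independently from $J(-2,p,1)$ is preferable to simplifying the first into the second by hand, which would force one to cancel the alternating double sums $\sum_{i}(-1)^{i}\sum_{k}\zeta(k+1)\zeta(i-k)$ directly.
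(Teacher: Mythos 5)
Your proposal is correct and follows essentially the same route as the paper: both displayed formulas are obtained by expanding one polylogarithm factor as a series (first $Li_1$, then $Li_p$), exchanging sum and integral via Lemma \ref{exchange}, isolating the $n=1$ term (giving $\zeta(p+1)$ and $\zeta(2)$ respectively), and finishing with partial fractions, $\sum_{n\ge 1}H_n/(n(n+1))=\zeta(2)$, and the Flajolet--Salvy evaluation of $S_{1,i}^{+,+}$. The only differences are cosmetic (your index shift $k=n-1$ versus the paper's partial fractions in $n$, and your explicit invocation of Theorem \ref{maintheorem1} where the paper uses the same $J_0(m,p)$ evaluation implicitly).
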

\begin{proof}
From the definition of $J(-2,p,1)$, we can write
\begin{align*}
J(-2,p,1)
&=\sum_{n=1}^\infty \frac{1}{n} \int_{0}^{1}x^{n-2} Li_{p}(x)\mathrm{d}x\\
&=\zeta(p+1)+\sum_{n=1}^\infty \frac{1}{n+1}\bigg(\sum_{j=2}^{p}\frac{(-1)^{p-j}}{n^{p+1-j}}\zeta(j)
+\frac{(-1)^{p-1}}{n^{p}}H_{n}\bigg)\\
&=\zeta(p+1)+\sum_{j=2}^{p}(-1)^{p-j}\zeta(j)\bigg(\sum_{i=2}^{p+1-j}(-1)^{p+1-j-i}\zeta(i)+(-1)^{p-j}\bigg)\\
&\quad +(-1)^{p-1}\sum_{n=1}^\infty H_{n}\bigg(\sum_{i=2}^{p}(-1)^{p-i}\frac{1}{n^{i}}+(-1)^{p-1}\frac{1}{n(n+1)}\bigg)\,.
\end{align*}
It is known that \cite{Sofo}
$$
\sum_{n=1}^\infty \frac{H_{n}}{n(n+1)}=\zeta(2)\,,
$$
thus we get the first result.

On the contrary,
\begin{align*}
J(-2,p,1)
&=-\sum_{n=1}^\infty \frac{1}{n^{p}} \int_{0}^{1}x^{n-2} \log(1-x)\mathrm{d}x\\
&=\zeta(2)+\sum_{n=2}^\infty H_{n-1}\bigg(\sum_{i=2}^{p}\frac{-1}{n^{i}}+\frac{1}{n(n-1)H_{n}}\bigg)\\
&=\zeta(2)+\sum_{n=1}^\infty \frac{H_{n}}{n(n+1)}-\sum_{i=2}^{p}\sum_{n=1}^\infty \frac{H_{n}}{n^{i}}
+\sum_{i=2}^{p}\zeta(i+1)\,,
\end{align*}
thus we get the second result.
\end{proof}

Now we derive explicit expression for $J(m,p,q)$.
\begin{theorem}\label{maintheorem3}
Let $p, q \in \mathbb N$ with $p \geq q$ and $m \in \mathbb N_{0}\cup\{-2\}$, then we have
\begin{align*}
&\quad J(m,p,q)\\
&=\sum_{x=1}^{q-1}\sum_{i_{1}=0}^{p-2}\frac{(-1)^{i_{1}+1}}{(m+1)^{i_{1}+1}}\cdots
\sum_{i_{x-1}=0}^{p-i_{1}-\cdots-i_{x-2}-2}\frac{(-1)^{i_{x-1}+1}}{(m+1)^{i_{x-1}+1}}\\
&\quad \times \sum_{i_{x}=0}^{p-i_{1}-\cdots-i_{x-1}-2}\frac{(-1)^{i_{x}}}{(m+1)^{i_{x}+1}}
\zeta(p-i_{1}-\cdots-i_{x})\zeta(q-x+1)\\
&\quad +\sum_{x=1}^{q-1}\frac{(-1)^{p-2+x}}{(m+1)^{p-2+x}}J(m,1,q-x+1)
\sum_{i_{1}=0}^{p-2}\cdots\sum_{i_{x-1}=0}^{p-i_{1}-\cdots-i_{x-2}-2}1\\
&\quad +\sum_{i_{1}=0}^{p-2}\frac{(-1)^{i_{1}+1}}{(m+1)^{i_{1}+1}}\cdots
\sum_{i_{q-1}=0}^{p-i_{1}-\cdots-i_{q-2}-2}\frac{(-1)^{i_{q-1}+1}}{(m+1)^{i_{q-1}+1}}
J(m,p-i_{1}-\cdots-i_{q-1},1)\,.
\end{align*}
where $K(m+p+x-1,0,q-x+1)$ and $K(m+i_{1}+\cdots+i_{q},p-i_{1}-\cdots-i_{q}+q,0)$ are given in Lemmata \ref{lem6}, \ref{lem7} and \ref{lem8}. Therefore $J(m,p,q)$ can be reduced to zeta values and generalized harmonic numbers.
\end{theorem}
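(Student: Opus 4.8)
The plan is to read Freitas's recurrence for $J(m,p,q)$ as a first-order linear recurrence in the single index $p$ (holding $q$ as a parameter), solve it in closed form, and then recurse downward on $q$. First I would record the symmetry $J(m,p,q)=J(m,q,p)$, which is immediate from the symmetry of the integrand $x^{m}Li_{p}(x)Li_{q}(x)$ under $p\leftrightarrow q$; this justifies unwinding the larger index $p$ and is why the hypothesis $p\geq q$ costs nothing. Every base case will have the shape $J(m,1,\,\cdot\,)$ or $J(m,\,\cdot\,,1)$, and by that symmetry these coincide and are evaluated by Lemmas \ref{lem6} and \ref{lem7} for $m\in\mathbb N_{0}$ and by Lemma \ref{lem8} for $m=-2$; the recurrence itself is valid throughout $m\in\mathbb N_{0}\cup\{-2\}$ since then $m+1\neq 0$.

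Second, fixing $q\geq 2$ and writing $c=\frac{1}{m+1}$, Freitas's relation reads $J(m,p,q)=-c\,J(m,p-1,q)+g_{q}(p)$ with inhomogeneous term $g_{q}(p)=c\,\zeta(p)\zeta(q)-c\,J(m,p,q-1)$. The standard solution with initial value $J(m,1,q)$ is $J(m,p,q)=(-c)^{p-1}J(m,1,q)+\sum_{i=0}^{p-2}(-c)^{i}g_{q}(p-i)$. Expanding $g_{q}$ and using $(-c)^{i}c=\frac{(-1)^{i}}{(m+1)^{i+1}}$ together with $(-c)^{i}(-c)=\frac{(-1)^{i+1}}{(m+1)^{i+1}}$ splits this into a pure product-of-zeta part $\sum_{i=0}^{p-2}\frac{(-1)^{i}}{(m+1)^{i+1}}\zeta(p-i)\zeta(q)$, a part carrying the lowered second index $\sum_{i=0}^{p-2}\frac{(-1)^{i+1}}{(m+1)^{i+1}}J(m,p-i,q-1)$, and the boundary term $(-c)^{p-1}J(m,1,q)$. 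This produces the $x=1$ contributions to the zeta and boundary families, together with the $J(m,\,\cdot\,,q-1)$ terms that drive the recursion.

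Third, I would induct on $q$: apply the same $p$-unwinding to every $J(m,\,\cdot\,,q-1)$ produced above, then to every $J(m,\,\cdot\,,q-2)$, and so on. Each round lowers the second index by one and introduces a fresh summation variable $i_{x}$ carrying the reduction factor $\frac{(-1)^{i_{x}+1}}{(m+1)^{i_{x}+1}}$, while that round's zeta-producing factor contributes $\frac{(-1)^{i_{x}}}{(m+1)^{i_{x}+1}}$; the upper limit $p-i_{1}-\cdots-i_{x-1}-2$ simply reflects that the $p$-recurrence can be iterated only down to first index $2$. After $q-1$ rounds the second index reaches $1$ and the process halts, leaving the three families of the statement: the product-of-zeta terms at every level $q-x+1$ (depth $x$, carrying $x-1$ reduction factors and one zeta-producing factor), the boundary terms $J(m,1,q-x+1)$, and the terminal terms $J(m,p-i_{1}-\cdots-i_{q-1},1)$.

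The step requiring genuine care is the bookkeeping for the boundary family $J(m,1,q-x+1)$. At depth $x$ such a term is born from the $(-c)^{P-1}$ boundary factor of the $p$-recurrence applied to $J(m,P,q-x+1)$ with $P=p-i_{1}-\cdots-i_{x-1}$, multiplied by the accumulated product $\prod_{k=1}^{x-1}\frac{(-1)^{i_{k}+1}}{(m+1)^{i_{k}+1}}$. Collecting signs as $(-1)^{(i_{1}+\cdots+i_{x-1})+(x-1)}\cdot(-1)^{P-1}=(-1)^{p-2+x}$ and the matching powers as $(m+1)^{(i_{1}+\cdots+i_{x-1})+(x-1)}\cdot(m+1)^{P-1}=(m+1)^{p-2+x}$, and using $P=p-(i_{1}+\cdots+i_{x-1})$, one sees that all dependence on $i_{1},\dots,i_{x-1}$ cancels, so the coefficient collapses to the index-independent constant $\frac{(-1)^{p-2+x}}{(m+1)^{p-2+x}}$ and the inner nested sums degenerate to the bare counting sum $\sum_{i_{1}=0}^{p-2}\cdots\sum_{i_{x-1}=0}^{p-i_{1}-\cdots-i_{x-2}-2}1$, exactly as claimed. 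Everything else is a routine unwinding of the recurrence, and substituting the Lemma \ref{lem6}--\ref{lem8} evaluations of the base cases then shows that $J(m,p,q)$ reduces to zeta values and generalized harmonic numbers.
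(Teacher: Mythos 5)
Your proposal is correct and follows essentially the same route as the paper: unwind Freitas's recurrence in the first index $p$ (the paper's ``successive application $p-1$ times'', which you phrase as solving a first-order linear recurrence), then iterate that relation over the second index until it reaches $1$, and finish with the base cases $J(m,1,\cdot)=J(m,\cdot,1)$ from Lemmas \ref{lem6}--\ref{lem8}. Your explicit verification that the boundary coefficients collapse to $\frac{(-1)^{p-2+x}}{(m+1)^{p-2+x}}$ independently of $i_{1},\dots,i_{x-1}$ is a welcome detail the paper leaves implicit, but it is the same argument.
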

\begin{proof}
It is known that \cite{Freitas}
$$
J(m,p,q)=\frac{\zeta(p)\zeta(q)}{m+1}-\frac{1}{m+1}\bigg(J(m,p-1,q)+J(m,p,q-1)\bigg)\,,
$$
successive application of the above relation $p-1$ times, we can obtain the following recurrence relation:
\begin{align*}
J(m,p,q)
&=\sum_{i_{1}=0}^{p-2}\frac{(-1)^{i_{1}}}{(m+1)^{i_{1}+1}}
\bigg(\zeta(p-i_{1})\zeta(q)-J(m,p-i_{1},q-1)\bigg)\\
&\quad +\frac{(-1)^{p-1}}{(m+1)^{p-1}}J(m,1,q)\,.
\end{align*}
Successive application of the new relation gives
\begin{align*}
&\quad J(m,p,q)\\
&=\sum_{x=1}^{q-1}\sum_{i_{1}=0}^{p-2}\frac{(-1)^{i_{1}+1}}{(m+1)^{i_{1}+1}}\cdots
\sum_{i_{x-1}=0}^{p-i_{1}-\cdots-i_{x-2}-2}\frac{(-1)^{i_{x-1}+1}}{(m+1)^{i_{x-1}+1}}\\
&\quad \times \sum_{i_{x}=0}^{p-i_{1}-\cdots-i_{x-1}-2}\frac{(-1)^{i_{x}}}{(m+1)^{i_{x}+1}}
\zeta(p-i_{1}-\cdots-i_{x})\zeta(q-x+1)\\
&\quad +\sum_{x=1}^{q-1}\frac{(-1)^{p-2+x}}{(m+1)^{p-2+x}}J(m,1,q-x+1)
\sum_{i_{1}=0}^{p-2}\cdots\sum_{i_{x-1}=0}^{p-i_{1}-\cdots-i_{x-2}-2}1\\
&\quad +\sum_{i_{1}=0}^{p-2}\frac{(-1)^{i_{1}+1}}{(m+1)^{i_{1}+1}}\cdots
\sum_{i_{q-1}=0}^{p-i_{1}-\cdots-i_{q-2}-2}\frac{(-1)^{i_{q-1}+1}}{(m+1)^{i_{q-1}+1}}
J(m,p-i_{1}-\cdots-i_{q-1},1)\,.
\end{align*}
\end{proof}

Freitas \cite{Freitas} gave the following recurrence relation for $K(r,0,q)$: For $r\geq 1$, $q\geq 2$, one has
\begin{align*}
K(r,0,q)
=(-1)^{r+q}\frac{r!}{(q-1)!}K(q-1,0,r+1)+(-1)^{r}r!\bigg(\zeta(r+1)\zeta(r+q+1)\bigg)\,.
\end{align*}
From this, Freitas showed that $K(r,p,q)$ could be reduced to zeta values as $p+q+r$ even. We now provide an explicit formula for $K(r,p,q)$.
\begin{theorem}\label{maintheorem4}
Let $p, q, m \in \mathbb N$ with $p \geq q$, then we have
\begin{align*}
&\quad K(m,p,q)\\
&=\sum_{x=1}^{q}K(m+p+x-1,0,q-x+1)\frac{(-1)^{p+x-1}}{(m+1)_{p+x-1}}\sum_{i_{1}=1}^{p}\cdots
\sum_{i_{x-1}=1}^{p-i_{1}-\cdots-i_{x-2}+x-2}1\\
&\quad +\sum_{i_{1}=1}^{p}\frac{(-1)^{i_{1}}}{(m+1)_{i_{1}}}\cdots\sum_{i_{q}=1}^{p-i_{1}-\cdots-i_{q-1}+q-1}
\frac{(-1)^{i_{q}}}{(m+i_{1}+\cdots+i_{q-1}+1)_{i_{q}}}\\
&\quad \times K(m+i_{1}+\cdots+i_{q},p-i_{1}-\cdots-i_{q}+q,0)\,.
\end{align*}
Therefore when $m+p+q$ is even, $K(m,p,q)$ can be reduced to zeta values and generalized harmonic numbers.
\end{theorem}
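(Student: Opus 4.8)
The plan is to mirror the proof of Theorem \ref{maintheorem3}: repeatedly substitute Freitas's recurrence
$$K(r,p,q)=-\frac{1}{r+1}\bigl(K(r+1,p-1,q)+K(r+1,p,q-1)\bigr)$$
into itself, the only new feature being that the first argument $r$ now increases by one at every step, so the accumulated denominators must be carried as Pochhammer symbols $(m+1)_{\bullet}$ rather than as powers of $m+1$. By the symmetry $K(m,p,q)=K(m,q,p)$ we may assume $p\ge q$, which guarantees that the middle index stays positive throughout every branch of the recursion.

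First I would collapse the reduction of the middle index into a single intermediate recurrence. Applying Freitas's relation to $K(m,p,q)$, keeping the summand whose middle index drops and peeling off the summand whose last index drops, and iterating this $p$ times until the middle index reaches $0$, I expect to obtain
$$K(m,p,q)=\frac{(-1)^{p}}{(m+1)_{p}}K(m+p,0,q)+\sum_{i=1}^{p}\frac{(-1)^{i}}{(m+1)_{i}}K(m+i,\,p-i+1,\,q-1)\,.$$
The essential simplification here is the Pochhammer telescoping $(m+1)_{j}(m+j+1)_{k}=(m+1)_{j+k}$, which lets the product of falling denominators $(m+1)(m+2)\cdots$ produced by successive applications of Freitas's relation consolidate into a single symbol.

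Next I would apply this intermediate recurrence $q$ times, each application lowering the last index by one. At stage $x$ (for $x=1,\dots,q$) the leading boundary term is some $K(m+p+x-1,0,q-x+1)$: each boundary step replaces the current first argument by (first $+$ middle) and lowers the last argument by one, so after $x-1$ prior reductions the first argument equals $m+p+x-1$ and the last equals $q-x+1$, exactly as claimed; the nested sums $\sum_{i_{1}=1}^{p}\cdots$ with summand $1$ merely count the ways those $x-1$ reductions could have been distributed, the upper limits $p-i_{1}-\cdots-i_{s-1}+s-1$ being precisely the middle index at stage $s$, and the coefficient $(-1)^{p+x-1}/(m+1)_{p+x-1}$ again emerging from the telescoping. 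The remaining non-boundary terms after stage $q$ are exactly those with last index $0$, namely the $K(m+i_{1}+\cdots+i_{q},\,p-i_{1}-\cdots-i_{q}+q,\,0)$ summands forming the second group. Finally, for the parity statement I would note that every boundary integral keeps the total weight fixed: in $K(m+p+x-1,0,q-x+1)$ the two outer arguments sum to $m+p+q$, and likewise $(m+i_{1}+\cdots+i_{q})+(p-i_{1}-\cdots-i_{q}+q)=m+p+q$ for the $K(\cdot,\cdot,0)$ terms (using $K(r,p,0)=K(r,0,p)$). Hence when $m+p+q$ is even each boundary term is of the form $K(r,0,s)$ with $r+s$ even and so reduces to zeta values by Freitas's result \cite{Freitas}, whence so does $K(m,p,q)$.

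The main obstacle I anticipate is not any analytic step but the combinatorial bookkeeping: verifying that the nested upper summation limits are exactly those forced by keeping the middle index nonnegative in every branch, and that signs and Pochhammer denominators telescope correctly across all $q$ layers. Turning the visible pattern of the first two stages into a clean induction on $q$ — where the inductive hypothesis must record both the shape of the boundary terms and the accumulated coefficient as a single Pochhammer symbol — is where the care is needed.
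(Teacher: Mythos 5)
Your proposal is correct and follows essentially the same route as the paper: derive the intermediate recurrence $K(m,p,q)=\sum_{i=1}^{p}\frac{(-1)^{i}}{(m+1)_{i}}K(m+i,p-i+1,q-1)+\frac{(-1)^{p}}{(m+1)_{p}}K(m+p,0,q)$ by iterating Freitas's relation until the middle index vanishes, then iterate that relation $q$ times with the Pochhammer bookkeeping to obtain the stated formula. Your parity argument (all boundary terms are $K(r,0,s)$ with $r+s=m+p+q$) is the same observation the paper makes, which it then finishes by quoting $K(m,0,q)=(-1)^{m}m!\,(S_{q,m+1}^{+,+}-\zeta(m+q+1))$ and the Flajolet--Salvy evaluation of linear Euler sums of odd weight, i.e.\ exactly the Freitas-type reduction you invoke.
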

\begin{proof}
It is known that \cite{Freitas}
$$
K(m,p,q)=-\frac{1}{m+1}\bigg(K(m+1,p-1,q)+K(m+1,p,q-1)\bigg)\,,
$$
successive application of the above relation $p-1$ times, we can obtain the following recurrence relation:
\begin{align*}
K(m,p,q)
=\sum_{i_{1}=1}^{p}\frac{(-1)^{i_{1}}}{(m+1)_{i_{1}}}K(m+i_{1},p-i_{1}+1,q-1)
+\frac{(-1)^{p}}{(m+1)^{p}}K(m+p,0,q)\,.
\end{align*}
Successive application of the new relation gives
\begin{align*}
&\quad K(m,p,q)\\
&=\sum_{x=1}^{q}K(m+p+x-1,0,q-x+1)\frac{(-1)^{p+x-1}}{(m+1)_{p+x-1}}\sum_{i_{1}=1}^{p}\cdots
\sum_{i_{x-1}=1}^{p-i_{1}-\cdots-i_{x-2}+x-2}1\\
&\quad +\sum_{i_{1}=1}^{p}\frac{(-1)^{i_{1}}}{(m+1)_{i_{1}}}\cdots\sum_{i_{q}=1}^{p-i_{1}-\cdots-i_{q-1}+q-1}
\frac{(-1)^{i_{q}}}{(m+i_{1}+\cdots+i_{q-1}+1)_{i_{q}}}\\
&\quad \times K(m+i_{1}+\cdots+i_{q},p-i_{1}-\cdots-i_{q}+q,0)\,.
\end{align*}
Note that
\begin{align*}
K(m,0,q)=m!(-1)^{m}\bigg(S_{q,m+1}^{+,+}-\zeta(m+q+1)\bigg)\,,\quad \cite{Freitas}
\end{align*}
and
\begin{align*}
S_{p,q}^{+,+}
&=\zeta(p+q)\bigg(\frac{1}{2}-\frac{(-1)^{p}}{2}\binom{p+q-1}{p}-\frac{(-1)^{p}}{2}\binom{p+q-1}{q}\bigg)\\
&\quad +\frac{1-(-1)^{p}}{2}\zeta(p)\zeta(q)+(-1)^{p}\sum_{k=1}^{\lfloor \frac{p}{2} \rfloor}\binom{p+q-2k-1}{q-1}\zeta(2k)\zeta(m-2k)\\
&\quad +(-1)^{p}\sum_{k=1}^{\lfloor \frac{q}{2} \rfloor}\binom{p+q-2k-1}{p-1}\zeta(2k)\zeta(m-2k)\,,\quad (p+q \quad \hbox{odd})\cite{Flajolet}
\end{align*}
where $\zeta(1)$ should be interpreted as $0$ whenever it occurs and $\lfloor x \rfloor$ denotes the floor function. Combining the above results, we get the desired result.
\end{proof}

\section{Acknowledgements}

The author is grateful to the referee for her/his useful comments and suggestions. The author is also grateful to Dr. Wanfeng Liang and Dr. Ke Wang for some useful discussions.

\end{document}